\title{On integers $n$ for which $X^n-1$ has a divisor\\ of every degree}
\author{Carl Pomerance}
\address{Department of Mathematics\\Dartmouth College\\Hanover, NH 03755}
\email{carlp@math.dartmouth.edu}
\author{Lola Thompson}
\address{Department of Mathematics\\ Oberlin College\\ Oberlin, OH 44074}
\email{lola.thompson@oberlin.edu}
\author{Andreas Weingartner}
\address{Department of Mathematics\\Southern Utah University\\Cedar City, UT 84720}
\email{weingartner@suu.edu}
\DeclareMathAlphabet{\curly}{U}{rsfs}{m}{n}
\newtheorem{thm}{Theorem}[section]
\newtheorem{lemma}[thm]{Lemma}
\newtheorem{cor}[thm]{Corollary}
\newtheorem*{definition}{Definition}
\newtheorem*{theorem*}{Theorem}
\theoremstyle{remark}
\numberwithin{equation}{section}
\newcommand\Z{\mathbf{Z}}
\newcommand\rad{\mathrm{rad}}
\renewcommand{\phi}{\varphi}
\renewcommand{\pod}[1]{\mathchoice
  {\allowbreak \if@display \mkern 18mu\else \mkern 8mu\fi (#1)}
  {\allowbreak \if@display \mkern 18mu\else \mkern 8mu\fi (#1)}
  {\mkern4mu(#1)}
  {\mkern4mu(#1)}
}
\begin{document}

\begin{abstract} 
A positive integer $n$ is called $\varphi$-practical if the polynomial $X^n-1$ has a divisor in $\Z[X]$ of every 
degree up to $n$. In this paper, we show that the count of $\varphi$-practical numbers in $[1, x]$ is asymptotic to $C x/\log x$ for some positive constant $C$ as $x \rightarrow \infty$. 
\end{abstract}


\maketitle

\section{Introduction}

Let $n$ be a positive integer. Following Srinivasan \cite{Srini}, 
we say that $n$ is \textit{practical} if every natural number up to $n$ can be written as a subsum 
of the natural divisors of $n$. The practical numbers have been well-studied beginning with Erd\H{o}s, 
who stated in a 1948 paper \cite{erdos2} that the practical numbers have asymptotic density $0$. Over the 
next half-century, various authors worked in pursuit of a precise estimate for the count of practical numbers in the interval $[1, x].$ 
Until recently, the strongest result in this vein was a pair of Chebyshev-type inequalities due to Saias \cite{saias1}:

\begin{thm}[Saias, 1997] 
\label{thm-saias}
Let $P\kern-1pt R(x)$ denote the number of practical numbers in $[1, x]$. 
There exist positive constants $\kappa_1$ and $\kappa_2$ such that for all $x\ge2$,
$$\kappa_1\frac{x}{\log x} \leq P\kern-1pt R(x) \leq \kappa_2\frac{x}{\log x}.$$ 
\end{thm}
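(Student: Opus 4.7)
The plan rests on the classical structure theorem of Stewart and Sierpi\'nski: an integer $n > 1$ is practical if and only if $n = m \cdot p^a$ for some practical $m$, some prime $p$ strictly exceeding every prime factor of $m$, and some $a \ge 1$ with $p \le \sigma(m) + 1$. Equivalently, if $n = p_1^{a_1}\cdots p_r^{a_r}$ with $p_1 < \cdots < p_r$, then $p_1 = 2$ and $p_{i+1} \le 1 + \sigma(p_1^{a_1}\cdots p_i^{a_i})$ for each $1 \le i < r$. This gives direct recursive access to practical numbers by peeling off the largest prime factor.

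\textbf{Upper bound.} Decompose each practical $n \le x$ uniquely as $n = m p^a$ with $p = P^+(n)$, $a = v_p(n)$, and $m = n/p^a$ practical. Since $p \le \sigma(m)+1$, one obtains
\[
P\kern-1pt R(x) \;\le\; 1 \;+\; \sum_{\substack{m \le x \\ m\text{ practical}}} \#\bigl\{p^a \le x/m : p\text{ prime},\ p \le \sigma(m)+1\bigr\}.
\]
By Chebyshev's theorem, the inner count is $O\bigl(\min(\sigma(m)+1,\, x/m)/\log(2+\min(\sigma(m)+1,\,x/m))\bigr)$ plus a negligible contribution from proper prime powers. Splitting the outer sum at $m = \sqrt{x}$ and applying Abel summation against the inductive hypothesis $P\kern-1pt R(y) \le C\,y/\log y$ for $y < x$, the small-$m$ range contributes $O(x/\log^2 x)$, while the large-$m$ range contributes at most $\lambda C \cdot x/\log x$ for some absolute $\lambda < 1$ (independent of $C$). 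Choosing $C = \kappa_2$ sufficiently large then closes the bootstrap.

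\textbf{Lower bound.} The same recursion supplies an explicit construction. Starting from a practical $m_0$ with $\sigma(m_0)/m_0$ bounded below by a constant $K > 1$, every prime $p$ with $P^+(m_0) < p \le K m_0$ yields a new practical number $m_0 p$, producing $\gg m_0/\log m_0$ practical values of size $\le K m_0^2$. Iterating --- attaching to each practical $m \le \sqrt{x}$ every admissible prime $p \le \min(\sigma(m)+1, x/m)$ --- and bootstrapping from a preliminary bound $P\kern-1pt R(y) \gg y/\log^2 y$ (obtained from short iteration trees) yields $\gg x/\log x$ practical numbers in $[1,x]$.

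\textbf{Main obstacle.} The crux is the upper-bound bootstrap: one must verify that the large-$m$ coefficient $\lambda$ is strictly less than $1$, so that the recursive inequality self-improves to the same constant $\kappa_2$. This requires tight Chebyshev-type estimates for primes in the relevant dyadic ranges and a careful choice of inductive quantity (typically $P\kern-1pt R(x) \log x / x$ rather than $P\kern-1pt R(x)$ itself). The lower bound is less delicate once one has organized the iterated prime multiplications to avoid double-counting; the challenge there is chiefly combinatorial rather than analytic.
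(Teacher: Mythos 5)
The paper itself does not prove this theorem; it is quoted as background from Saias's 1997 paper \cite{saias1}, so there is no in-paper argument to compare against. Evaluating your proposal on its own merits: the recursive Stewart--Sierpi\'nski characterization and the peel-off-the-largest-prime-power decomposition are the correct starting point, but the upper-bound bootstrap as you state it does not close, and the reason is exactly the obstruction that made this theorem hard.

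The trouble is the claim that the large-$m$ range contributes at most $\lambda C x/\log x$ with an absolute $\lambda<1$. Writing $L=\log x$ and splitting dyadically $x/m\asymp t=2^{j}$, the block $m\in(x/2t,x/t]$ contributes $\ll \pi(2t)\,P\kern-1pt R(x/t)\ll (t/j)\cdot C(x/t)/(L-j)=Cx/(j(L-j))$, and summing over $1\le j\le L/2$ gives $\asymp Cx\,(\log\log x)/\log x$, not $\lambda Cx/\log x$. The constraint $p>P^+(m)$ helps only when $m$ is $x/m$-smooth, and while smooth-number estimates kill the extreme $j$'s, the middle range $j\asymp L$ still accumulates the harmonic loss; one cannot simply absorb it by choosing $C$ large because the excess factor grows with $x$. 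Historically this is precisely the gap between Tenenbaum's 1986 bound, which carries an extra $\log\log x$, and Saias's 1997 theorem. Saias removes the loss by working with the two-parameter quantity $D(x,y,t)$ (integers $n\le x$ with $P^+(n)\le y$ and dense divisors, governed by a multiplicative growth condition $p_{i+1}\le t\,p_1^{a_1}\cdots p_i^{a_i}$) and deriving a genuine functional equation in $(x,y)$ against $\Phi(x,y)$ and the Buchstab function, rather than a one-parameter inequality in $x$ alone. The present paper's Section 3 is modeled on exactly this machinery (Lemmas~\ref{lem-phi0starter}--\ref{lem-5.6}), which is a signal that the single-variable bootstrap is not enough. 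Your lower-bound sketch is standard and essentially fine (Margenstern had already obtained $P\kern-1pt R(x)\gg x/\log x$ this way); it is the upper bound where the missing idea lies, namely the reduction to a dense-divisor condition with a fixed multiplicative constant and the two-variable smooth/dense functional equation, neither of which appears in your outline.
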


It was conjectured in 1991 by Margenstern \cite{margenstern} that 
$P\kern-1pt R(x) \sim \kappa \frac{x}{\log x}$ as $x \rightarrow \infty$ 
for some positive constant $\kappa$. 
Such an asymptotic for $P\kern-1pt R(x)$ was finally obtained by the third author \cite{weingartner}, 
resolving Margenstern's conjecture affirmatively. 
 
\begin{thm}[Weingartner, 2015] 
\label{thm-wein}
There is a positive constant $\kappa$ such that for $x \geq 3$ 
$$P\kern-1pt R(x) = \frac{\kappa x}{\log x}\left\{1 + O\left(\frac{\log \log x}{\log x}\right)\right\}.$$ 
\end{thm}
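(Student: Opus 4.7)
The plan is to exploit the well-known recursive characterization of practical numbers due to Stewart and Sierpi\'nski: writing $n=p_1^{a_1}\cdots p_k^{a_k}$ with $p_1<\cdots<p_k$, $n$ is practical iff $p_1=2$ and $p_{j+1}\le\sigma(p_1^{a_1}\cdots p_j^{a_j})+1$ for every $j<k$. This lets one build practical numbers inductively by multiplying a practical $m$ by a prime power $p^a$ with $p>P^+(m)$ (the largest prime factor of $m$) and $p\le\sigma(m)+1$. Moreover, for a practical $m>1$ one always has $\sigma(m)\ge 2m-1$, so the constraint $p\le\sigma(m)+1$ is in fact the non-trivial restriction only for a narrow range of $p$ near $\sigma(m)$.

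First I would decompose by peeling off the largest prime factor. Ignoring the negligible contribution from $a\ge 2$ (which at most adds $O(x/\log^2 x)$ via standard bounds) one obtains, essentially,
$$PR(x)=1+\sum_{\substack{m<x\\ m\text{ practical}}}\bigl(\pi(\min(x/m,\sigma(m)+1))-\pi(P^+(m))\bigr).$$
Substituting the prime number theorem with a de~la~Vall\'ee~Poussin type error term converts this into an integral equation for the density $f(x):=PR(x)\log x/x$. Letting $\sigma(m)\sim 2m$ heuristically (which is accurate for most practical $m$, and whose defect can be quantified via a sieve bound on $\sigma(m)/m$), one arrives at a self-referential relation of delay-differential type governing $f$. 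I would then show, as Saias's lower bound from Theorem~\ref{thm-saias} already guarantees, that $f$ is bounded away from $0$ and $\infty$, and use a contraction or monotone iteration argument to show that $f(x)$ converges to a limit $\kappa>0$.

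The principal obstacle will be upgrading convergence of $f(x)$ to the quantitative error term $O(\log\log x/\log x)$. This requires a careful bootstrap: if one knows $f(x)=\kappa+O(g(x))$ for some decreasing $g$, then inserting this into the integral equation and using Mertens' estimate $\sum_{p\le y}1/p=\log\log y+M+O(1/\log y)$ typically improves $g$ by a factor of roughly $\log\log x/\log x$. The source of the $\log\log x$ is precisely this Mertens sum appearing in the integration over the largest prime factor, and the argument stabilizes at an error of the stated shape. Along the way one must also quantify the exceptional set of practical $m$ for which $\sigma(m)/m$ deviates appreciably from~$2$, and bound the contribution of $m$ with unusually large $P^+(m)$ using Hildebrand--Tenenbaum type estimates for smooth numbers; these technical inputs, rather than any single clever identity, constitute the bulk of the work.
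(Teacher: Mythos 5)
Note first that this paper does not itself prove Theorem~\ref{thm-wein}; the result is cited from Weingartner's 2015 paper. But Section~\ref{sec-freely} (Theorem~\ref{thm-starterasymp} and Lemmas~\ref{Bm-upper-bound}--\ref{lem-5.6}) adapts that proof line by line in the $\varphi$-practical setting, so the method is exposed here, and your proposal is a genuinely different line of attack with two substantive gaps.

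You recurse forward on the largest prime factor, obtaining a relation whose upper cutoff is $\min(x/m,\sigma(m)+1)$. Weingartner inverts the construction: he counts squarefree $j$ with no small prime factor via the \emph{unique} decomposition $j=bk$ in which $b$ is the longest initial segment keeping $mb$ practical and $P^-(k)>\theta(mb)$. That yields the identity of Lemma~\ref{lem-phi0starter}, where the fluctuating quantity $\theta(n)=\sigma(n)+1$ sits only as the \emph{lower} cutoff inside $\Phi_0(x/mb,\theta(mb))$, which is estimated uniformly by Buchstab asymptotics (equations \eqref{eq:omega}, \eqref{eq:Phi}), and whose average effect collapses exactly into the convergent constant of Lemma~\ref{lem-B(x)simplification}. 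In your version, $\sigma(m)+1$ appears as the upper cutoff of a prime-counting function, so the resulting relation is \emph{not} self-contained in $f(x)=PR(x)\log x/x$: to proceed you would need the joint distribution of $(m,\sigma(m),P^+(m))$ over practical $m$, and the remark that this is "quantifiable via a sieve bound on $\sigma(m)/m$" does not produce the analogue of Lemma~\ref{lem-B(x)simplification} that makes the main term close. Second, "use a contraction or monotone iteration argument to show that $f(x)$ converges to a limit $\kappa>0$" is not a minor step to be filled in — it is the theorem. Weingartner reduces to a delay-type integral equation of the form in Lemma~\ref{lem-5.6} and then runs an analytic argument (the Laplace-transform picture alluded to in the paper's final section) to extract both the limit and the error rate; your proposal does not indicate why the cruder forward recursion lands in a class of equations for which such convergence, let alone the $O(\log\log x/\log x)$ rate, is known.
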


A key property of practical numbers used in these results had been proved in the 1950s
by Stewart \cite{stew} and Sierpi\'nski \cite{sier}, who gave a recursive characterization:
\textit{The number $1$ is practical and if $n$ is practical
and $p$ is a prime, then $pn$ is practical if and only if $p\le 1+\sigma(n)$.  All practical numbers
arise in this way.}  Here $\sigma$ is
the sum-of-divisors function.

Perhaps more centrally placed in the anatomy of integers are the 2-dense numbers:
A positive integer $n$ is \textit{$2$-dense} if each interval $[y,2y]$ contained in $[1,n]$ has
a divisor of $n$.  The recursive criterion for a number to be 2-dense: \textit{The number $1$ is
$2$-dense and if $n$ is $2$-dense and $p$ is a prime, then $pn$ is $2$-dense if and only if
$p\le 2n$.  All $2$-dense numbers arise in this way.}  
Analogues of Theorems \ref{thm-saias}, \ref{thm-wein} hold as well for
2-dense numbers, and by essentially the same proofs.  (For 2-dense numbers
the analogue of Theorem \ref{thm-wein} has the slightly stronger error
term $O(1/\log x)$.)

This paper discusses the related concept of \textit{$\varphi$-practical} numbers:  A positive integer $n$
is $\varphi$-practical if $X^n-1$ has divisors in ${\bf Z}[X]$ of every degree to $n$.  Since $X^n-1$
is squarefree with its irreducible factors having degrees $\varphi(d)$ as $d$ runs over the divisors of $n$
(where $\varphi$ is Euler's function), it follows that $n$ is $\varphi$-practical if and only if
each natural number to $n$ is a subsum of the set $\{\varphi(d):d\mid n\}$.
These numbers were first considered by the second author in her Ph.D.\ thesis.
It is natural to consider whether the methods for practical numbers and 2-dense numbers can be
used for $\varphi$-practical numbers.

Complicating things is that there is no simple growth condition on the prime factors that
categorizes the $\varphi$-practical numbers.  However, there are some conditions that come close
to doing this, see \cite{thompson}:
\begin{itemize}
\item  If $n$ is $\varphi$-practical and $p$ is a prime that does not divide $n$, then $pn$ is
$\varphi$-practical if and only if $p\le n+2$.
\item If $n$ is $\varphi$-practical and $p$ is a prime that does not divide $n$, then $p^jn$
is $\varphi$-practical for each integer $j\ge 2$ if and only if $p\le n+1$.
\end{itemize}
Consider the set ${\mathcal W}$ built up recursively by the rules $1\in{\mathcal W}$ and if
$n\in{\mathcal W}$ and $p$ is prime, then $pn\in{\mathcal W}$ if and only if $p\le n+2$.
We say a member of ${\mathcal W}$ is \emph{weakly $\varphi$-practical}.
As shown in \cite{thompson}, every $\varphi$-practical number is weakly $\varphi$-practical.
As the second bullet above indicates, not all weakly $\varphi$-practical numbers are $\varphi$-practical.
With practical and 2-dense numbers, if the largest prime factor is removed, one again has
a practical or 2-dense number, respectively. The same holds for weakly $\varphi$-practical
numbers. However, this is not the case for $\varphi$-practicals.
In particular, there are $\varphi$-practical numbers $pn$ where $p$ is
greater than all of the primes dividing $n$, but $n$ itself is not $\varphi$-practical. An
example is $pn=315=3^2\cdot5\cdot7$.

It was noted in \cite{thompson} that every even number that is weakly $\varphi$-practical is
$\varphi$-practical.  Using this, it follows from \cite{weingartner}
that the analogue of Theorem \ref{thm-wein} holds for even $\varphi$-practical numbers.

Meanwhile, in \cite{thompson}, the second author was able to show the analogue of Theorem \ref{thm-saias}
for all of the $\varphi$-practical numbers.  Let $P_\varphi(x)$ denote the number of $\varphi$-practical
numbers in $[1,x]$.
\begin{thm}[Thompson, 2013]
\label{thm-thompson}
There are positive numbers $\kappa_3,\kappa_4$ such that for all $x\ge 2$,
$$
\kappa_3 \frac{x}{\log x} \leq P_\varphi(x) \leq \kappa_4 \frac{x}{\log x}.
$$ 
\end{thm}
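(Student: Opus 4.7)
The plan is to sandwich $P_\varphi(x)$ using the two bullet points from \cite{thompson} recalled in the excerpt. Writing $W(x)$ and $W_e(x)$ for the counts of weakly $\varphi$-practical and even weakly $\varphi$-practical numbers up to $x$, we have
\[
W_e(x) \le P_\varphi(x) \le W(x),
\]
so it suffices to prove $W(x) \ll x/\log x$ and $W_e(x) \gg x/\log x$. The natural tool for both is the recursive structure of $\mathcal{W}$: any $n \in \mathcal{W}$ with $n > 1$ factors uniquely as $n = m p^a$ with $p = P^+(n)$, $a = v_p(n) \ge 1$, and $P^+(m) < p$, and the membership condition reads $m \in \mathcal{W}$ together with $p \le m+2$. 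This is structurally analogous to the recursion $p \le \sigma(m)+1$ defining the practical numbers and treated by Saias in \cite{saias1}.

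\textbf{Upper bound.} I would follow the Saias upper-bound argument for $P\kern-1pt R(x)$ essentially verbatim. Unrolling the recursion, one obtains
\[
W(x) \le 1 + \sum_{\substack{m \in \mathcal{W} \\ m \le x}}\ \sum_{\substack{p \text{ prime} \\ P^+(m) < p \le m+2 \\ p \le x/m}} \Bigl\lfloor \frac{\log(x/m)}{\log p} \Bigr\rfloor.
\]
Splitting the outer sum at $m = \sqrt{x}$, estimating the inner sums by Chebyshev's $\pi(y) \ll y/\log y$, and running a bootstrap under the inductive assumption $W(y) \le C y/\log y$ for all $y < x$ then closes, giving $W(x) \le C x/\log x$ for a suitable absolute constant $C$; the tighter condition $p \le m+2$ (versus $p \le \sigma(m)+1$) only strengthens the bound.

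\textbf{Lower bound.} I would exhibit $\gg x/\log x$ even weakly $\varphi$-practicals via a dyadic construction mirroring Saias's lower-bound argument. The key inequality is
\[
W_e(2y) - W_e(y) \ge \sum_{m \in \mathcal{W}_e,\ m \le y}\ \#\bigl\{p \text{ prime} : y/m < p \le \min(2y/m,\, m+2),\ P^+(m) < p,\ p \nmid m\bigr\},
\]
which one bounds from below via the PNT applied to primes in the intervals $(y/m, 2y/m]$ for $m > \sqrt{2y}$ (where the $p \le m+2$ constraint is inactive). A bootstrap seeded by the crude inclusion $\{2^k\} \subseteq \mathcal{W}_e$ and iterated over $O(\log x)$ dyadic doublings then amplifies the density, producing $W_e(x) \gg x/\log x$.

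\textbf{Main obstacle.} The technical crux is ensuring that both the recursive upper bound and the dyadic lower bound close with the condition $p \le m+2$ in place of the more generous $p \le \sigma(m)+1$. In particular, the lower-bound construction relies on primes $p$ drawn from intervals $(y/m, \min(2y/m, m+2)]$ whose length becomes small when $m$ falls below $\sqrt{2y}$, so one must verify that enough even weakly $\varphi$-practical $m$ exist in the critical range $(\sqrt{2y}, y/2)$ to drive the iteration. Matching Saias's approach with the modified constraint—and confirming that the bootstrap recovers the same order $x/\log x$ that it does for practical and $2$-dense numbers—is where the bulk of the technical effort lies.
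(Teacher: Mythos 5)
This theorem is a cited result from the second author's earlier paper~\cite{thompson}; the present paper does not give a proof of it, so there is no in-paper argument to match against. That said, your plan is the natural one, and it agrees with everything the paper reveals about Thompson's argument: the paper explicitly records the two containments you use (every $\varphi$-practical number is weakly $\varphi$-practical, and every even weakly $\varphi$-practical number is $\varphi$-practical), and it describes Thompson's result as ``the analogue of Theorem~\ref{thm-saias},'' i.e.\ a Saias-style adaptation to the recursion $p\le n+2$. The sandwich $W_e(x)\le P_\varphi(x)\le W(x)$ together with Saias-type Chebyshev bounds for $W$ and $W_e$ is exactly the intended route, and your unrolling $n=mp^a$ with $p=P^+(n)$, $m\in\mathcal W$, $p\le m+2$, $a\ge1$ is the correct recursive decomposition of $\mathcal W$.

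One detail worth correcting in the lower-bound sketch: you describe the ``critical range'' for the seed $m$ as $(\sqrt{2y},\,y/2)$, and treat the constraint $p\le m+2$ as the binding one. In fact the binding constraint for most $m$ in that range is $p>P^+(m)$: for $n=mp\in\mathcal W_e$ with $p=P^+(n)$ one has $p\le 2y/m$, and since $P^+(m)$ can be as large as roughly $\sqrt{m}$ even for $m\in\mathcal W$, the condition $P^+(m)<p\le 2y/m$ is vacuous once $m$ exceeds about $y^{2/3}$. So the contribution to $W_e(2y)-W_e(y)$ is actually concentrated on $m\asymp\sqrt y$ (up to a bounded power of $\log$), where $P^+(m)\lesssim m^{1/2}\asymp y^{1/4}\ll y/m$ makes the $p>P^+(m)$ restriction harmless and the interval $(y/m,2y/m]$ contains $\gg(y/m)/\log y$ primes. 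The bootstrap then correctly relates the density of $\mathcal W_e$ near $y$ to its density near $\sqrt y$; it does not need $m$ to range out to $y/2$. With that adjustment your sketch closes in the standard way, and the tighter condition $p\le m+2$ (versus $p\le\sigma(m)+1$) poses no obstruction since the relevant primes live in intervals of length $\asymp y/m\le m$ anyway.
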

In the present paper, we obtain an asymptotic for the count of $\varphi$-practical numbers up to $x$. Our main theorem can be stated as follows.

\begin{thm}
\label{thm-main}
There is a positive number $C$ such that for $x \geq 2$ 
$$P_\varphi(x) = \frac{C x}{\log x}\left\{1 + O\left(\frac{1}{\log x}\right)\right\}.$$ 
\end{thm}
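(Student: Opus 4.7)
The plan is to compute $P_\varphi(x)$ by passing through the larger, more tractable set $\mathcal{W}$ of weakly $\varphi$-practical numbers: first prove an asymptotic for $|\mathcal{W}\cap[1,x]|$, then prove one for the bad set $\mathcal{B} = \{n\in\mathcal{W} : n \text{ is not } \varphi\text{-practical}\}$, and subtract. Both counts should be of the form $c\,x/\log x\,(1+O(1/\log x))$, and the constant $C$ in Theorem \ref{thm-main} will emerge as their difference.

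The count of $\mathcal{W}$ should follow by adapting the method of \cite{weingartner} used for practical and 2-dense numbers. The defining recursion ``$pn\in\mathcal{W}$ iff $p\nmid n$ and $p\le n+2$'' is of the same linear type ``$p\le f(n)$'' as the recursions defining practical and 2-dense numbers, for which that method already delivers an asymptotic $c\,x/\log x\,(1+O(1/\log x))$ (with the slightly stronger error for 2-dense numbers noted parenthetically after Theorem \ref{thm-wein}). A Buchstab-type functional equation for the Dirichlet series supported on $\mathcal{W}$, coupled with a standard contour shift against $\zeta(s)$, should carry through with essentially the same error term.

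To identify which $n\in\mathcal{W}$ are actually $\varphi$-practical, write $n=mp$ with $p$ the largest prime factor; then $m\in\mathcal{W}$ and $p\nmid m$. The divisors of $n$ partition as $\{d,\,pd : d\mid m\}$, so the set of subsums of $\{\varphi(d):d\mid n\}$ equals $\{a+(p-1)b : a,b\in S(m)\}$, where $S(m)$ denotes the set of subsums of $\{\varphi(d):d\mid m\}$. Hence $n$ is $\varphi$-practical iff this translated sumset covers $\{0,1,\dots,mp\}$. When $m$ is itself $\varphi$-practical, $S(m)=\{0,1,\dots,m\}$ and the criterion reduces to $p\le m+2$, exactly the defining condition of $\mathcal{W}$; thus membership in $\mathcal{B}$ is entirely controlled by the gap structure of $S(m)$ for those $m\in\mathcal{W}$ that are themselves not $\varphi$-practical.

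The main obstacle is counting $\mathcal{B}\cap[1,x]$ with relative error $O(1/\log x)$. This requires uniform quantitative control, over $m\in\mathcal{W}$ in dyadic ranges, of the gap structure of $S(m)$---concretely, of which adjoined primes $p$ make $mp$ $\varphi$-practical and which do not. With such uniformity in hand, the count of $\mathcal{B}\cap[1,x]$ should be assembled from a sum over suitable skeletons $m$ paired with a weighted prime number theorem for the admissible prime multipliers, yielding an asymptotic of the same shape as in Step 1. This uniformity requirement is where the analysis genuinely departs from the practical and 2-dense cases, and it is the step I expect to require the bulk of the work.
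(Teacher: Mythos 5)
There is a genuine gap. Your Step 1, counting $\mathcal W$, is sound: $\mathcal W$ is defined by a Stewart--Sierpi\'nski-type recursion, so \cite{weingartner} applies directly. The trouble starts in Step 2/3.

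First, the decomposition ``write $n=mp$ with $p=P^+(n)$; then $p\nmid m$'' is false whenever $P^+(n)^2\mid n$ (e.g.\ $n=9$, $n=45$). In that case the divisors of $n$ do not partition as $\{d,pd:d\mid m\}$, the identity $S(n)=\{a+(p-1)b:a,b\in S(m)\}$ fails, and your criterion collapses. This is not a corner case one can wave away: the paper emphasizes that the entire difficulty of the problem sits in the squarefull part. Every element of $\mathcal B=\mathcal W\setminus\{\varphi\text{-practicals}\}$ is non-squarefree (a squarefree $n\in\mathcal W$ is automatically $\varphi$-practical), so the set you most need to count is precisely the one for which your decomposition breaks.

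Second, $\mathcal B$ does not obey a recursion of the form needed to run the Weingartner/Buchstab machinery on it. Membership in $\mathcal B$ is \emph{not} preserved by appending a prime allowed by the $\mathcal W$-rule: $45=3^2\cdot5\in\mathcal B$, but $315=45\cdot7\notin\mathcal B$ because $315$ \emph{is} $\varphi$-practical even though $45$ is not (this is exactly the example in the introduction). So there is no functional equation for a Dirichlet series supported on $\mathcal B$ of the kind you invoke, and ``summing over skeletons $m$ with a weighted prime number theorem'' has no obvious realization. You flag the needed ``uniformity'' as the bulk of the work, but what is actually missing is a structural replacement for the recursion, not just an error estimate.

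The paper's route is different and addresses precisely this obstruction. Rather than $\mathcal W$-minus-$\mathcal B$, it partitions the $\varphi$-practicals directly by their \emph{starter} $m$ (a $\varphi$-practical initial divisor with $m/P^+(m)$ not $\varphi$-practical or $P^+(m)^2\mid m$, with $n/m$ squarefree). Each class $\mathcal B_m$ then does satisfy a clean recursion: $mb\in\mathcal B_m$ with $b=p_1\cdots p_k$ squarefree requires $p_i\le mp_1\cdots p_{i-1}+2$, which is exactly the shape the \cite{weingartner} method handles, yielding $B_m(x)=c_mx/\log x+O(r_mx/\log^2x)$. The remaining work, and the genuinely new ingredient, is Theorem \ref{thm-startercount}, a sparsity bound $\#\{\text{starters}\le x\}\le x\exp\{-(\tfrac{\sqrt2}{4}+o(1))\sqrt{\log x\log\log x}\}$, proved via the $H(n)=(n+1)/\varphi(n)$ criterion of Theorem \ref{thm-startercond} and the sumset lemmas of Section 5; this is what makes $\sum c_m$ and $\sum r_m$ converge so the per-starter asymptotics can be summed. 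Your proposal has no analogue of this sparsity step, and without it even a correct per-class asymptotic would not assemble into Theorem \ref{thm-main}.
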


Our strategy is to try to use the \textit{squarefree-squarefull} decomposition of a
positive integer $n$, namely $n=qs$ where $s$ is the largest squarefull divisor of $n$
(a number is \textit{squarefull} if it is divisible by the square of each of its
prime factors).  The idea is to fix the squarefull part $s$ and obtain an asymptotic
for the $\varphi$-practicals with this squarefull part.  The plan works in a
fairly straightforward way for some cases, like $s=1$ and $s=4$, but it is not so
easy to do for other cases, such as $s=9$.

Our methods do not yield an explicit estimate for the constant $C$ that appears in the statement of Theorem \ref{thm-main}. The numerical computations in the second author's Ph.D. thesis (summarized here in Table \ref{phi-ratios},
with a new calaculation at $10^{10}$) 
seem to suggest that $C \approx 1$.  In the final section we give an argument for why $C$ may be slightly
less than 1. 
 
\begin{table}[h!]\label{phi-ratios}
\begin{center}
    \begin{tabular}{ | l | l | c |}
    \hline
    $X$ & $P_{\varphi}(X)$ & $P_{\varphi}(X)/(X/\log X)$\\ \hline
    $10^1$ & 6 & 1.381551 \\
    $10^2$ & 28 & 1.289448 \\
    $10^3$ & 174 & 1.201949 \\
    $10^4$ & 1198 & 1.103399 \\
    $10^5$ & 9301 & 1.070817 \\
    $10^6$ & 74461 & 1.028717 \\
    $10^7$ & 635528 & 1.024350 \\
    $10^8$ & 5525973 & 1.017922 \\
    $10^9$ & 48386047 & 1.002717 \\
    $10^{10}$ & 431320394 & 0.993152 \\
    \hline
    \end{tabular}
\end{center}
\caption{Ratios for $\varphi$-practicals}\label{table:fphi}
\end{table}

\section{Preliminaries}\label{sec-preliminaries}

In this section, we set the notation and define some terminology that will be used throughout the paper.  We also establish some lemmas on the
distribution of squarefree numbers without small prime factors.

We use the letter $p$, with or without subscripts, to denote primes. 

For an integer $n>1$, let $P^+(n)$ denote the largest prime dividing $n$, and let $P^-(n)$ denote
the smallest prime dividing $n$.  Further, we let $P^+(1)=1$ and $P^-(1)=+\infty$.

We say that $d$ is an {\em initial divisor} of $n$ if $d\mid n$ and $P^+(d)<P^-(n/d)$.  

As mentioned earlier, a positive integer $n$ is {\em squarefull} if $p^2\mid n$ for each prime $p\mid n$.
The {\em squarefull part} of $n$ is the largest squarefull divisor of $n$.


We write $A(x)\ll B(x)$ if $A(x)=O(B(x))$.  We write $A(x)\asymp B(x)$ if
$A(x)\ll B(x)\ll A(x)$.

For $u \geq 1$, we define Buchstab's function $\omega(u)$ to be the unique continuous solution to the equation $$(u \omega(u))' = \omega(u-1) \hspace{0.2 in} (u > 2)$$ with initial condition 
$$u\omega(u) = 1 \hspace{0.2 in} (1 \leq u \leq 2).$$ 
For $u < 1$, let $\omega(u) = 0$. 
We have
\begin{equation}
\label{eq:omega}
|\omega(u)-e^{-\gamma}|\le1/\Gamma(u+1),~~u\ge0,
\end{equation}
see \cite[Lemma 2.1]{weingartner}.  Let 
$$\Phi(x, y) = \sum_{\substack{n \leq x \\ P^-(n) > y}} 1.$$ 
We record the following result in \cite[Lemma 2.2]{weingartner}:
For $x\ge1, y\ge2, u:=\log x/\log y$, we have
\begin{equation}
\label{eq:Phi}
\Phi(x,y)=e^\gamma x\omega(u)\prod_{p\le y}\left(1-\frac1p\right)+O\left(\frac y{\log y}+\frac{xe^{-u/3}}{(\log y)^2}\right).
\end{equation}

We will need a variant of $\Phi(x,y)$ for squarefree numbers.
\begin{definition} For a positive integer $n$, let 
$$
\Phi_0(x, y) := \displaystyle\sum_{\substack{n \leq x \\ P^-(n) > y}} \mu^2(n).
$$ 
\end{definition}
In other words, $\Phi_0(x, y)$ detects the squarefree values of $n$ counted in $\Phi(x, y)$. 
The following two lemmas allow us to estimate this function.

\begin{lemma}
\label{phinot} 
For $x \geq 1$ and $y \geq 2$, we have $$\Phi_0(x, y) = \Phi(x, y) + O\left(\frac{x}{y \log y}\right).$$ 
\end{lemma}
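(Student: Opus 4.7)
The plan is to bound the difference $\Phi(x,y)-\Phi_0(x,y)$ directly. By definition this difference counts integers $n\le x$ with $P^-(n)>y$ that are \emph{not} squarefree. Any such $n$ is divisible by $p^2$ for some prime $p$, and the condition $P^-(n)>y$ forces $p>y$ (and necessarily $p\le\sqrt x$, since $p^2\le n\le x$). So one can partition, or just union-bound, these $n$ according to such a prime $p$.

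The union bound gives
$$
\Phi(x,y)-\Phi_0(x,y)\;\le\;\sum_{y<p\le\sqrt x}\#\{n\le x:p^2\mid n,\ P^-(n)>y\}\;\le\;\sum_{y<p\le\sqrt x}\Big\lfloor\frac{x}{p^2}\Big\rfloor\;\le\;x\sum_{p>y}\frac1{p^2},
$$
where the inner count was bounded trivially by dropping the condition $P^-(n)>y$ on $n=p^2m$ and using $m\le x/p^2$. What remains is to estimate $\sum_{p>y}1/p^2$. Using the Chebyshev estimate $\pi(t)\ll t/\log t$ and partial summation, one obtains $\sum_{p>y}1/p^2\ll 1/(y\log y)$, which plugged into the previous display yields the claimed error term $O(x/(y\log y))$.

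Since $\Phi_0(x,y)\le\Phi(x,y)$ trivially, this one-sided bound is exactly what is needed. I do not anticipate a serious obstacle: the argument is just the standard observation that the non-squarefree integers all-of-whose prime factors exceed $y$ form a very thin set, controlled by the convergent tail of $\sum_p 1/p^2$. The only mild points to check are that the hypotheses $x\ge 1$, $y\ge 2$ make the Chebyshev bound and the partial summation valid (in particular that $\log y$ is bounded away from $0$), and that when $y>\sqrt x$ the sum on the right is empty so the stated bound holds trivially.
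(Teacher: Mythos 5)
Your proposal is correct and follows exactly the same route as the paper's proof: bound $\Phi(x,y)-\Phi_0(x,y)$ by a union bound over primes $p>y$ with $p^2\mid n$, giving $x\sum_{p>y}1/p^2 = O(x/(y\log y))$. The only difference is that you spell out the Chebyshev-plus-partial-summation step for the tail sum, which the paper leaves implicit.
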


\begin{proof} Observe that 
\begin{align*} 
0\le\Phi(x, y) - \Phi_0(x, y) 
\leq \sum_{p > y} \sum_{\substack{n \leq x \\ p^2 \mid n}} 1  
\leq \sum_{p > y} \left\lfloor \frac{x}{p^2} \right\rfloor  
\leq x \sum_{p > y} \frac{1}{p^2}  = O\left(\frac{x}{y \log y}\right).
\end{align*} 
\end{proof}

\begin{lemma}
\label{lemma2'} 
 For $x \geq 1$ and $2\le y \le e^{2\sqrt{\log x}}$, we have 
$$
\Phi_0(x, y) = \frac{6}{\pi^2} x \prod_{p \le y} \left(1 + \frac{1}{p}\right)^{-1} + O\left(\frac{x}{e^{\frac16\sqrt{\log x}}}\right).
$$ 
\end{lemma}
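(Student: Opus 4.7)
The plan is to apply the M\"obius identity $\mu^2(n) = \sum_{a^2\mid n}\mu(a)$ directly to $\Phi_0(x,y)$ rather than going through Lemma \ref{phinot} (whose $O(x/(y\log y))$ error is too weak when $y$ is small). Interchanging summations and noting that $a^2\mid n$ together with $P^-(n)>y$ forces $P^-(a)>y$, one obtains
$$\Phi_0(x,y) = \sum_{\substack{a\le\sqrt{x}\\ P^-(a)>y}}\mu(a)\,\Phi(x/a^2,y).$$
I would set $B:=e^{\sqrt{\log x}/6}$ and split the outer sum at $a=B$. The tail $a>B$ is handled trivially via $\Phi(x/a^2,y)\le x/a^2$, contributing $\ll x\sum_{a>B}a^{-2}\ll x/B = xe^{-\sqrt{\log x}/6}$, which already matches the target error.

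For the head $a\le B$, I would apply \eqref{eq:Phi} with $u_a:=\log(x/a^2)/\log y$. The hypothesis $\log y\le 2\sqrt{\log x}$ combined with $a\le B$ yields $u_a\ge (\log x - \tfrac13\sqrt{\log x})/(2\sqrt{\log x})\ge \tfrac12\sqrt{\log x}-O(1)$, so \eqref{eq:omega} allows replacement of $\omega(u_a)$ by $e^{-\gamma}$ at per-term cost $\le 1/\Gamma(u_a+1)$, which is super-exponentially small in $\sqrt{\log x}$ and hence contributes negligibly when summed against $1/a^2$. The surviving main term is $x\prod_{p\le y}(1-1/p)\sum_{a\le B,\,P^-(a)>y}\mu(a)/a^2$; completing the inner sum over all $a$ with $P^-(a)>y$ costs another $O(x/B)$, and the completed sum equals the Euler product $\prod_{p>y}(1-1/p^2)$. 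The algebraic identity
$$\prod_{p\le y}\!\left(1-\tfrac1p\right)\prod_{p>y}\!\left(1-\tfrac1{p^2}\right) = \frac{6}{\pi^2}\prod_{p\le y}\!\left(1+\tfrac1p\right)^{-1},$$
which follows from $\prod_p(1-1/p^2)=6/\pi^2$ and the factorization $1-1/p^2=(1-1/p)(1+1/p)$, then produces the claimed main term.

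What remains is to sum the error terms of \eqref{eq:Phi} over $a\le B$. The contribution of $y/\log y$ is at most $By/\log y \ll e^{\sqrt{\log x}/6}\cdot e^{2\sqrt{\log x}}$, which is $\ll xe^{-\sqrt{\log x}/6}$ for large $x$, and the contribution of $(x/a^2)e^{-u_a/3}/(\log y)^2$ summed over $a\le B$ is $\ll xe^{-u_a/3}\ll xe^{-\sqrt{\log x}/6}$ by the lower bound on $u_a$. The crux, and main potential obstacle, is the calibration of the cutoff $B$: it must be large enough that both the truncation loss $x/B$ and the tail of the M\"obius sum fall below the target, yet small enough that $u_a$ remains $\ge \tfrac12\sqrt{\log x}-O(1)$ throughout the head. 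The choice $B=e^{\sqrt{\log x}/6}$ is forced by matching $u_a/3$ with $\sqrt{\log x}/6$ under the hypothesis $y\le e^{2\sqrt{\log x}}$, and it is precisely this balance that delivers the stated error term.
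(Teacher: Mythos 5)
Your proposal is correct and follows essentially the same path as the paper's proof: both expand $\mu^2$ via the M\"obius identity to get $\Phi_0(x,y)=\sum_{a\le\sqrt x,\,P^-(a)>y}\mu(a)\Phi(x/a^2,y)$, split at a threshold, bound the tail trivially, apply \eqref{eq:Phi} and \eqref{eq:omega} in the head, complete the M\"obius sum, and invoke $\prod_p(1-p^{-2})=6/\pi^2$. The only cosmetic difference is the choice of cutoff --- you use $B=e^{\sqrt{\log x}/6}$ (so the tail exactly matches the target error), while the paper uses the larger $e^{\sqrt{\log x}}$ and then absorbs the $d$-dependence in the error term of \eqref{eq:Phi} via the observation $d^2e^{u'/3}\gg d^{4/3}e^{u/3}$; your crude uniform bound $e^{-u_a/3}\ll e^{-\sqrt{\log x}/6}$ for $a\le B$ does the same job and is, if anything, a touch simpler.
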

\begin{proof} 
By definition of $\Phi_0(x, y)$, we have 
$$
\Phi_0(x, y)  = \sum_{\substack{n \leq x \\ P^-(n) > y}} \mu^2(n) 
 = \sum_{\substack{n \leq x \\ P^-(n) > y}} \sum_{d^2 \mid n} \mu(d) 
 = \sum_{\substack{d \leq \sqrt{x}  \\
P^-(d) > y}} \mu(d) \Phi(x/d^2, y).
$$
We split the values of $d$ into two ranges: $d > e^{(\log x)^{1/2}}$ and $d \leq e^{(\log x)^{1/2}}$. 
Since $\Phi(x/d^2,y)\le x/d^2$,
the contribution from the terms where $d > e^{(\log x)^{1/2}}$ is trivially $O\left(\frac{x}{e^{(\log x)^{1/2}}}\right)$.
 For the remainder of the proof, we consider only those $d$ for which $d \leq e^{(\log x)^{1/2}}$. 
 From \eqref{eq:Phi}, we have 
\begin{align}
\label{weingartnerl2} 
\Phi(x/d^2, y) = e^\gamma \omega\left(\frac{\log(x/d^2)}{\log y}\right) \frac{x}{d^2} \prod_{p \leq y} \left(1 - \frac{1}{p}\right) 
+ O\left(\frac y{\log y}+\frac{x}{d^2 e^{u'/3}}\right),
\end{align} 
where $u' = \frac{\log(x/d^2)}{\log y}$.  Let $u=\frac{\log x}{\log y}$,
so that $u'= u(1+o(1))$ as $x\to \infty$.
Thus, for values of $d$ in this range and using \eqref{eq:omega}, 
we have $\omega\left(\frac{\log (x/d^2)}{\log y}\right) = e^{-\gamma} + O(\frac{1}{e^u})$.  
Also, $d^2e^{u'/3}\gg d^{4/3}e^{u/3}$. 
Inserting these estimates into \eqref{weingartnerl2} yields 
$$
\Phi(x/d^2, y) = \frac{x}{d^2} \prod_{p \leq y} \left(1 - \frac{1}{p}\right) + O\left(\frac{x}{d^{4/3} e^{u/3}}\right).$$ 
Therefore, for $y \leq e^{2\sqrt{\log x}}$, we have 
\begin{align}
\label{phi0} 
\Phi_0(x, y) = x\sum_{\substack{d \leq e^{(\log x)^{1/2}}\\ P^-(d) > y}} 
\frac{\mu(d)}{d^2} \prod_{p \leq y} \left(1 - \frac{1}{p}\right) 
+ O\left(\frac{x}{e^{\frac16(\log x)^{1/2}}}\right).
\end{align} 
We can rewrite the sum over $d$ as 
\begin{align}
\label{diffofsums} 
\sum_{P^-(d) > y} \frac{\mu(d)}{d^2} - \sum_{\substack{d > e^{(\log x)^{1/2}} \\ P^-(d) > y}} \frac{\mu(d)}{d^2}.
\end{align} 
As above, the contribution from the subtracted sum is $O\left(\frac{1}{e^{(\log x)^{1/2}}}\right)$. 
Using this in \eqref{phi0} we have 
\begin{align*}
\Phi_0(x,y)&=x\prod_{p\le y}\left(1-\frac1p\right)
\sum_{P^-(d)>y}\frac{\mu(d)}{d^2}
+O\left(\frac{x}{e^{\frac16(\log x)^{1/2}}}\right)\\
&= 
x\prod_{p \le y} \left(1 - \frac{1}{p}\right) \prod_{p > y} \left(1 - \frac{1}{p^2}\right) 
+ O\left(\frac{x}{e^{\frac16(\log x)^{1/2}}}\right).
\end{align*} 
The products can be rewritten as 
$$
\prod_{p \le y} \left(1 - \frac{1}{p} \right) \frac{\displaystyle\prod_{p} 
\left(1 - \frac{1}{p^2}\right)}{\displaystyle\prod_{p \le y} 
\left(1 - \frac{1}{p^2}\right)} = \frac6{\pi^2}\prod_{p \le y} \left(1 + \frac{1}{p}\right)^{-1},
$$ 
yielding our result. 
\end{proof}


\section{Growing ``squarefreely"}
\label{sec-freely}

The comments in the introduction about $\varphi$-practical numbers indicate that the situation
is simpler for squarefree ones.  In particular, a squarefree number is $\varphi$-practical if and
only if its canonical prime factorization $p_1\dots p_k$, where $p_1<\dots<p_k$, has each
$p_i\le 2+p_1\dots p_{i-1}$.  In this section we shall obtain an asymptotic estimate for the
distribution of squarefree $\varphi$-practical numbers, doing so in a somewhat more general setting.

Let $\theta$ be any real-valued arithmetic function defined on $[1,\infty)$ with $\theta(1)\ge2$ and
$x\le\theta(x)\ll x$.  Let $m$ be an arbitrary positive integer.
Let $\mathcal{B}_m$ denote the set of positive integers $mb$, where $b$ is squarefree,
$P^+(m)<P^-(b)$, and the canonical prime factorization of $b=p_1\dots p_k$, with $p_1<\dots <p_k$,
satisfies $p_i\le\theta(mp_1\dots p_{i-1})$ for each $i=1,\dots,k$.
Let $B_m(x) = \#\{n \leq x : n \in \mathcal{B}_m\}$. 
Observe that if $m=1$ and if $\theta(n) = n + 2$, then $B_1(x)$ counts the number of 
squarefree $\varphi$-practical numbers $n \leq x$.   Also observe that if $\theta(n)=n+2$ and
$m$ is $\varphi$-practical,
so too is every member of $\mathcal{B}_m$.  
Let $\chi_m(n)$ denote the characteristic function of $\mathcal{B}_m$, i.e., 
\begin{displaymath}
   \chi_m(n) = \left\{
     \begin{array}{lr}
       1 & \mathrm{if} \ n \in \mathcal{B}_m\\
       0 & \mathrm{otherwise}.
     \end{array}
   \right.
\end{displaymath} 

\begin{thm}
\label{thm-starterasymp}
Let $r_m = m^{-1} (\log 2m)^6$. 
There is a sequence of real numbers $c_m$ such that 
$$B_m(x)= c_m \frac{x}{\log x} + O\left(r_m \frac{x }{\log^2 x}\right) \qquad (m\ge 1, \ x\ge 2).$$
\end{thm}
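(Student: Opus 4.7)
My plan is to establish the theorem by combining a Buchstab-type functional equation for $B_m$ with the squarefree sieve estimates of Section~\ref{sec-preliminaries}, in the style of Weingartner's proof of Theorem~\ref{thm-wein}. The first step is the observation that every $n \in \mathcal{B}_m$ with $n > m$ factors uniquely as $n = n' p$ where $n' \in \mathcal{B}_m$, $p = P^+(n)$, and $P^+(n') < p \le \theta(n')$; this yields
\[
B_m(x) = [m \le x] + \sum_{n' \in \mathcal{B}_m}\bigl(\pi(\min(\theta(n'), x/n')) - \pi(P^+(n'))\bigr),
\]
the sum being effectively supported on $n' \le x/2$. Applying the prime number theorem with a classical error term replaces each $\pi$-difference by its leading term up to a relative error of order $1/\log$.

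The next step is a head-tail decomposition of each $n \in \mathcal{B}_m$ with respect to a threshold $Y = Y(x)$: write $n = n_0 t$, where $n_0$ is the maximal initial segment of the factorization of $n$ with $n_0/P^+(n_0) < Y$, and $t$ is squarefree with $P^-(t) > P^+(n_0)$. Because $\theta(N) \ge N$, once $Y$ is chosen appropriately the growth condition $p_{j+i+1} \le \theta(m p_1\cdots p_{j+i})$ on the tail primes is automatic, so
\[
B_m(x) = B_m(Y) + \sum_{n_0 \in \mathcal{H}_m(Y)} \Phi_0(x/n_0, P^+(n_0)),
\]
where $\mathcal{H}_m(Y)$ collects the heads. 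Lemmas~\ref{phinot} and \ref{lemma2'}, together with Mertens's theorem applied to $\prod_{p \le P^+(n_0)}(1+1/p)^{-1}$, convert each $\Phi_0$-term into $\tfrac{36 e^{-\gamma}}{\pi^4}\cdot\tfrac{x}{n_0 \log P^+(n_0)}$ plus explicit errors.

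The final step is to evaluate the head-sum $\sum_{n_0} 1/(n_0 \log P^+(n_0))$. A second application of the recursion, writing $n_0 = n' p$ with $n' \in \mathcal{B}_m$, $n' < Y$, converts this into a double sum $\sum_{n'} (1/n')\sum_p (\,\cdot\,)/p$ that can be handled by Mertens's theorem on primes. The resulting expression takes the asserted asymptotic shape, with $c_m$ defined as the limiting value of a convergent series. The uniformity $r_m = m^{-1}(\log 2m)^6$ should be tracked step by step: the factor $m^{-1}$ is inherited from the elementary bounds $B_m(x) \le x/m$ and $\sum_{n' \in \mathcal{B}_m} 1/n' \ll (1/m)\log(2x/m)$, while $(\log 2m)^6$ absorbs the combined polylog losses arising from two passes of PNT and Mertens within the head-tail framework.

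The principal technical obstacle is the calibration of $Y$: Lemma~\ref{lemma2'} requires $\log P^+(n_0) \le 2\sqrt{\log(x/n_0)}$, effectively capping $Y$ at roughly $e^{\sqrt{\log x}}$, whereas the automatic-tail argument demands $Y \ge x^{1/2}$. Reconciling these incompatible requirements will likely force either a multi-stage decomposition, iterating the head-tail split so that each stage stays within the range of the sieve lemma, or an induction on $x$ that reduces the asymptotic at $x$ to the asymptotic at $y \le x^{1/2}$. Whichever refinement is used, it is probably responsible for the specific exponent $6$ appearing in $(\log 2m)^6$.
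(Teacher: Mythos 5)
Your outline has the right flavor --- a Buchstab-type recursion plus the squarefree-sieve estimates of Section~\ref{sec-preliminaries}, in the mold of \cite{weingartner} --- but it diverges from a working proof in two places, and the second is a genuine gap. On the first, the range tension you flag does not in fact force a multi-stage decomposition or an induction on $x$. The functional equation (Lemma~\ref{lem-phi0starter}) expands $\Phi_0(x/m, P^+(m))$ as $\sum_b \chi_m(mb)\,\Phi_0(x/mb,\theta(mb))$; the terms with $mb > \sqrt{x}$ collapse to $B_m(x) - B_m(\sqrt{x})$, and the remaining sum over $mb \le \sqrt{x}$ is split once more at $e^{\sqrt{\log x}}$. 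The small range is handled by Lemma~\ref{lemma2'}, while the intermediate range $e^{\sqrt{\log x}} < mb \le \sqrt{x}$ --- precisely where Lemma~\ref{lemma2'} is out of reach --- is handled by reducing $\Phi_0$ to $\Phi$ via Lemma~\ref{phinot} and then invoking~\eqref{eq:Phi}, which carries no upper restriction on $y$. The exponent~$6$ in $(\log 2m)^6$ comes from trivially absorbing the case $\log 2x \ll (\log 2m)^2$ at the outset, not from iterating the head--tail split.

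The genuine gap is your step~3. A ``direct evaluation'' of the head sum via a second pass of the recursion plus Mertens is circular: rewriting $n_0 = n'p$ just trades one sum over the recursively defined set $\mathcal{B}_m$ for another sum over $\mathcal{B}_m$, and Mertens alone will not collapse it to a constant. What actually happens is that the Mertens-simplified functional equation, upon dividing by $x$ and letting $x\to\infty$, yields the identity in Lemma~\ref{lem-B(x)simplification}; substituting this back produces a single convergent series over $\mathcal{B}_m$ weighted by $e^{-\gamma} - \omega(\cdot)$ (Lemma~\ref{lem-5.5}), which is then recast as the integral equation of Lemma~\ref{lem-5.6} and fed into the machinery of \cite[Theorem~1.3]{weingartner}. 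That last step --- solving the Buchstab-type integral equation, not a direct summation --- is what produces the constant $c_m$ together with the uniform error $r_m x/\log^2 x$, and it is missing from your outline. (A minor slip along the way: your constant $\tfrac{36e^{-\gamma}}{\pi^4}$ should be $e^{-\gamma}$; the factor $\tfrac{6}{\pi^2}$ from Lemma~\ref{lemma2'} cancels against $\prod_p(1-1/p^2)$ when one evaluates $\prod_{p\le y}(1+1/p)^{-1}$ by Mertens.)
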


Our proof will follow closely the proof of \cite[Theorem 1.2]{weingartner}. We begin with a simple upper bound for $B_m(x)$.

\begin{lemma}
\label{Bm-upper-bound}
For $m\ge1$ , $x \geq 1$, we have
$$ B_m(x) \ll \frac{x \log 2m}{m \log2x}.$$ 
\end{lemma}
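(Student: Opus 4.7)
\noindent
The plan is to exploit the recursive structure of $\mathcal{B}_m$. Every $n \in \mathcal{B}_m \setminus \{m\}$ factors uniquely as $n = (mp)\, b''$ where $p = P^-(n/m)$ is the smallest prime of $b = n/m$; tracking how the growth condition $p_i \le \theta(m p_1 \cdots p_{i-1})$ transforms when we shift the base from $m$ to $mp$, one checks that $p$ must lie in $(P^+(m), \theta(m)]$ and that $(mp)\,b'' \in \mathcal{B}_{mp}$. This yields the disjoint decomposition
\begin{equation*}
B_m(x) = 1 + \sum_{P^+(m) < p \le \theta(m)} B_{mp}(x).
\end{equation*}
For each inner term I would discard the multiplicative growth condition and use only that every element of $\mathcal{B}_{mp}$ has the form $(mp) b''$ with $b''$ squarefree and $P^-(b'') > p$, giving $B_{mp}(x) \le \Phi_0(x/(mp), p)$.

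\medskip\noindent
From (2.1) and Lemma~\ref{phinot}, $\Phi_0(y,z) \ll y/\log(z+2)$ when $y \ge z+2$ (with $\Phi_0 \le 1$ otherwise). Substituting and splitting the resulting sum at $p = \sqrt{x/m}$ (the transition point), I would apply partial summation to Mertens' theorem to evaluate $\sum_{y_1 < p \le y_2} 1/(p\log p) = 1/\log y_1 - 1/\log y_2 + O(\cdot)$, controlling the small-$p$ main term, while the large-$p$ tail where $\Phi_0 \le 1$ is bounded directly by $\pi(\theta(m)) - \pi(\sqrt{x/m})$ via $\pi(y) \ll y/\log y$.

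\medskip\noindent
The main obstacle is that the sieve bound obtained this way is $B_m(x) \ll x/(m\log(P^+(m)+2))$, which matches the stated target $x\log 2m/(m\log 2x)$ only when $P^+(m)$ is comparable to a positive power of $m$. When $P^+(m)$ is very small relative to $m$ (e.g.\ $m = 2^k$ with $k$ large), the sieve bound degrades to roughly $x/m$, whereas the claim demands an additional factor $\log 2m/\log 2x$. To close this gap I would argue by induction on $m$ using the recursion: once the bound is known for every $m' > m$, applying it to $B_{mp}(x)$ (where $P^+(mp)=p$) and summing with the Mertens estimates above should contribute a factor of $\log(\theta(m)/(P^+(m)+2))$, which together with the trivial bound $B_m(x)\le \lfloor x/m\rfloor$ (effective when $m$ is close to $x$) should combine via a case analysis to produce the stated $\log 2m/\log 2x$ improvement. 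The delicate point will be preventing the induction from leaking an extraneous $\log\log$ factor when $P^+(m)$ is small, which likely forces one to keep track of how the range of summation in the recursion interacts with the $\sum 1/p$ and $\sum \log p/p$ estimates.
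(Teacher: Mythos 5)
The paper's proof is a two-line reduction: since $\theta(n) \ll n$, every $mb \in \mathcal{B}_m$ has its prime factorization obeying $p_i \le Cm\,p_1\cdots p_{i-1}$ for a fixed constant $C$, which is exactly the growth condition governing Saias's theory of integers with $t$-dense divisors; Theorem~1 of \cite{saias1} then gives $B_m(x) \ll x\log Cm/(m\log Cx)$ directly. You instead try to rederive the bound from scratch via the recursion
$B_m(x) = 1 + \sum_{P^+(m) < p \le \theta(m)} B_{mp}(x)$ together with a sieve bound $B_{mp}(x) \le \Phi_0(x/(mp),p)$. The recursion itself is sound, and so is the sieve inequality, but — as you yourself flag — the resulting estimate $B_m(x)\ll x/(m\log(P^+(m)+2))$ is not $\ll x\log 2m/(m\log 2x)$ when $P^+(m)$ is tiny compared with $m$.

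Your proposed rescue by induction does not close. Feeding the target bound $B_{mp}(x)\ll x\log(2mp)/(mp\log 2x)$ back into the recursion yields
\begin{equation*}
\sum_{P^+(m)<p\le\theta(m)} \frac{\log 2mp}{p} \;\ll\; \log(2m)\bigl(\log\log\theta(m)-\log\log P^+(m)\bigr)+O\bigl(\log\theta(m)\bigr),
\end{equation*}
and for $m$ like $2^k$ the factor $\log\log\theta(m)-\log\log P^+(m)$ is $\asymp \log\log m$, so you lose exactly the $\log\log$ factor you feared. There is also no natural base case on which to anchor the induction, since the recursion keeps pushing to larger $m$. In short, your decomposition is a reasonable starting point, but turning it into a proof would require essentially reproducing the analysis in \cite{saias1} (a nontrivial functional-equation argument), rather than a straightforward induction. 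The efficient move, and the one the paper makes, is simply to recognize the hypothesis as the $t$-dense growth condition and cite Saias's Theorem~1.
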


\begin{proof}
If $mb \in \mathcal{B}_m$ and $mb\le x$, then 
$b=p_1\dots p_k \le x/m$ and $p_i\le Cm p_1\dots p_{i-1}$ for each $i=1,\dots,k$, for some positive constant $C$, since $\theta(n) \ll n$. 
Theorem 1 of \cite{saias1} implies that 
$$ B_m(x) \ll \frac{x \log Cm}{m \log C x} \ll \frac{x \log 2m}{m \log2x}.$$
\end{proof}

The following lemma is the analogue of \cite[Lemma 5.2]{weingartner}.

\begin{lemma}\label{lem-phi0starter} For $m\ge1$ , $x \geq 1$,
we have 
\begin{equation}
\label{eq:bm1}
B_m(x)=
\Phi_0(x/m, P^+(m)) - \sum_{mb \leq \sqrt{x}} \chi_m(mb) \Phi_0(x/mb, \theta(mb)) + B_m(\sqrt{x}) 
\end{equation}
\end{lemma}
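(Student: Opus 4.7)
The plan is to prove the identity by partitioning the set
$$\mathcal{S} := \{n = mb \leq x : b \text{ squarefree},\ P^-(b) > P^+(m)\},$$
whose cardinality is $\Phi_0(x/m, P^+(m))$. For each $n = mb \in \mathcal{S}$ with $b = p_1 \cdots p_k$, $p_1 < \cdots < p_k$, either every $p_i$ satisfies $p_i \le \theta(m p_1 \cdots p_{i-1})$, placing $n$ in $\mathcal{B}_m$, or there is a smallest index $j$ where this fails. In the latter case I would factor $n = n_1 c$ where $n_1 := m p_1 \cdots p_{j-1}$ and $c := p_j \cdots p_k \ge 2$; by minimality of $j$, we have $n_1 \in \mathcal{B}_m$, while $c$ is a squarefree number with $P^-(c) = p_j > \theta(n_1)$. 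This decomposition is unique.

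The step I expect to be the main (and essentially only) conceptual point is the $\sqrt{x}$ truncation: I would show that the ``Case B'' decomposition forces $n_1 \le \sqrt{x}$. Indeed, if $n_1 > \sqrt{x}$, then since $\theta(n_1) \ge n_1$ one has $P^-(c) > \theta(n_1) \ge n_1 > \sqrt{x}$; but also $c = n/n_1 \le x/n_1 < \sqrt{x}$, which is incompatible with $c \ge 2$. So Case B only contributes for $n_1 \in \mathcal{B}_m$ with $n_1 \le \sqrt{x}$.

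For each such $n_1$, the admissible extensions $c \ge 2$ are precisely the squarefree integers $c$ with $c \le x/n_1$ and $P^-(c) > \theta(n_1)$, which number $\Phi_0(x/n_1, \theta(n_1)) - 1$ (the subtracted $1$ removes the always-counted value $c = 1$). Summing the partition gives
$$\Phi_0(x/m, P^+(m)) \;=\; B_m(x) + \sum_{\substack{n_1 \in \mathcal{B}_m \\ n_1 \le \sqrt{x}}} \bigl(\Phi_0(x/n_1, \theta(n_1)) - 1\bigr),$$
and since the contribution of the $-1$'s equals $B_m(\sqrt{x})$, rearranging yields \eqref{eq:bm1}. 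The bookkeeping closely mirrors \cite[Lemma 5.2]{weingartner}, the role of $\Phi(x,y)$ there being taken here by its squarefree variant $\Phi_0(x,y)$, so no new analytic input is required.
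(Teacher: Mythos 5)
Your proof is correct and takes essentially the same approach as the paper's: both rest on the unique factorization $j=bk$ (your $n_1,c$) of squarefree integers with no small prime factors, and both use $\theta(n)\ge n$ to obtain the $\sqrt{x}$ truncation. The only difference is bookkeeping---you split off the $c=1$ case explicitly and argue $n_1\le\sqrt{x}$ directly in Case B, while the paper keeps $k=1$ inside the sum and instead observes that $\Phi_0(x/mb,\theta(mb))=1$ whenever $\sqrt{x}<mb\le x$; the two reorganizations yield the identity identically.
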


\begin{proof} 
We may assume $x\ge m$, or else each term in \eqref{eq:bm1} vanishes.
For all squarefree $j \leq x/m$ with $P^-(j) > P^+(m)$, we can decompose $j = bk$, where 
$mb \in \mathcal{B}_m$ and $P^-(k) > \theta(mb).$ As in \cite{weingartner}, this decomposition is unique. 
Moreover, since $P^-(j) > P^+(m)$, we necessarily have $P^-(b) > P^+(m)$. Thus, 
$$
\Phi_0(x/m, P^+(m)) = \sum_{b \leq x/m } \chi_m(mb) 
\sum_{\substack{k \leq x/mb \\ P^-(k) > \theta(mb)}} \mu^2(k) 
= \sum_{b \leq x/m } \chi_m(mb) \Phi_0(x/mb, \theta(mb)).
$$
Since $\theta(mb) \geq mb$, we have $\Phi_0(x/mb, \theta(mb)) = 1$ for $\sqrt{x} < mb \leq x.$ 
As a result, 
$$
\Phi_0(x/m,P^+(m))=B_m(x) - B_m(\sqrt{x}) + \sum_{mb \leq \sqrt{x} } \chi_m(mb) \Phi_0(x/mb, \theta(mb)).
$$
Solving for $B_m(x)$ yields the result.
\end{proof}

Next, we prove a variant of \cite[Lemma 5.3]{weingartner} for $\Phi_0(x, y).$
\begin{lemma}\label{lem-B(x)starter1} 
For $m\ge 1$, $x\ge 1$ we have
\begin{align*} 
B_m(x) &= \frac{6}{\pi^2} \frac{x}{m} \prod_{p \leq P^+(m)} \left(1 + \frac{1}{p}\right)^{-1} 
- \frac{6}{\pi^2} x \sum_{mb \leq e^{\sqrt{\log x}} } \frac{\chi_m(mb)}{mb} 
\prod_{p \leq \theta(mb)} \left(1 + \frac{1}{p}\right)^{-1} \\ 
&- x \sum_{e^{\sqrt{\log x}} < mb \leq \sqrt{x} } \frac{\chi_m(mb)}{mb} 
e^\gamma \omega \left(\frac{\log(x/mb)}{\log \theta(mb)} \right) 
\prod_{p \leq \theta(mb)} \left(1 - \frac{1}{p}\right) + O\left( \frac{r_m x}{\log^22x} \right).
\end{align*} 
\end{lemma}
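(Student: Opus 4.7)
The plan is to take the exact identity of Lemma~\ref{lem-phi0starter} and substitute asymptotic formulas for $\Phi_0$ term by term, then aggregate the errors. First, I would dispose of the range where $x$ is not large compared with $m$ (say $x \le m^3$) by observing that Lemma~\ref{Bm-upper-bound} already gives $B_m(x) \ll x(\log 2m)/(m\log 2x) \ll r_m x/\log^2 x$ in this range, and that each purported main term on the right side is of the same order, so the asserted equality holds with everything swallowed by the $O$-term. Henceforth I may assume $x$ is large compared with $m$.

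For the first term $\Phi_0(x/m,P^+(m))$, the inequality $P^+(m) \le m \le x^{1/3} \le e^{2\sqrt{\log(x/m)}}$ allows me to apply Lemma~\ref{lemma2'}, producing the first explicit main term with error $O\bigl((x/m)\exp(-\tfrac16\sqrt{\log(x/m)})\bigr)$. Next I would split the sum $\sum_{mb \le \sqrt{x}}\chi_m(mb)\Phi_0(x/(mb),\theta(mb))$ from Lemma~\ref{lem-phi0starter} at the threshold $mb = e^{\sqrt{\log x}}$. On the lower part $mb \le e^{\sqrt{\log x}}$, the bound $\theta(mb) \ll mb \le e^{\sqrt{\log x}} \le e^{2\sqrt{\log(x/(mb))}}$ again permits Lemma~\ref{lemma2'}, yielding the second explicit main term. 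On the upper part $e^{\sqrt{\log x}} < mb \le \sqrt{x}$, I would combine Lemma~\ref{phinot} with \eqref{eq:Phi} (whose parameter $u = \log(x/(mb))/\log\theta(mb)$ is at least $1$ because $mb \le \sqrt{x}$ and $\theta(mb) \ll mb$), which delivers the Buchstab-weighted main term involving $\omega$ together with the Mertens product $\prod_{p\le\theta(mb)}(1-1/p)$. The leftover $B_m(\sqrt{x})$ is absorbed by the trivial bound of Lemma~\ref{Bm-upper-bound}.

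The main obstacle is verifying that the collected error terms fit inside $O(r_m x/\log^2 x)$. On the upper range the per-summand error from \eqref{eq:Phi} is of the shape $O\bigl(\theta(mb)/\log\theta(mb) + (x/(mb))e^{-u/3}/(\log\theta(mb))^2\bigr)$, and the contributions from Lemma~\ref{lemma2'} on the lower range carry a factor $\exp(-\tfrac16\sqrt{\log(x/(mb))})$. To sum these weighted by $\chi_m(mb)$, I would run a dyadic decomposition in $mb$ and apply partial summation, using Lemma~\ref{Bm-upper-bound} to count members of $\mathcal{B}_m$ in each dyadic window. Each such application injects a factor of $\log 2m$, and tracking the total power of $\log 2m$ that accumulates through these nested bounds is what ultimately produces the $(\log 2m)^6$ inside $r_m$. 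This bookkeeping is the heart of the calculation and parallels the argument for \cite[Lemma~5.3]{weingartner}, with Lemmas~\ref{lemma2'} and~\ref{phinot} substituted for the corresponding estimates of $\Phi(x,y)$ used there.
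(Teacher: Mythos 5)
Your overall strategy matches the paper's: start from the exact identity in Lemma~\ref{lem-phi0starter}, split the sum at $mb=e^{\sqrt{\log x}}$, apply Lemma~\ref{lemma2'} below the cut and Lemma~\ref{phinot} plus \eqref{eq:Phi} above it, and bound the accumulated errors. But there is a genuine gap in the threshold you set aside at the start, and a misattribution of where the exponent $6$ in $r_m$ actually comes from.

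The threshold $x\le m^3$ is too weak. You then claim that for $x>m^3$ one has $P^+(m)\le m\le x^{1/3}\le e^{2\sqrt{\log(x/m)}}$ and hence that Lemma~\ref{lemma2'} applies to $\Phi_0(x/m,P^+(m))$. But $x^{1/3}\le e^{2\sqrt{\log(x/m)}}$ is false for large $x$: with $m$ near $x^{1/3}$ one has $\log(x/m)\approx\tfrac23\log x$, and the inequality $\tfrac13\log x\le 2\sqrt{\tfrac23\log x}$ only holds when $\log x$ is bounded (about $\log x\le 24$). The same problem recurs throughout the $mb\le e^{\sqrt{\log x}}$ range, since $\theta(mb)\gg mb\gg m$. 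To make the applications of Lemma~\ref{lemma2'} legitimate you need $\log P^+(m)\ll\sqrt{\log x}$, i.e.\ roughly $\log 2x\gg\log^2 2m$. That is exactly the threshold the paper uses: it disposes first of the regime $\log 2x\ll\log^2 2m$, where everything (the quantity $B_m(x)\le x/m$ and each main term, which is $\ll xm^{-1}\log 2x$) is absorbed by $O(r_mx/\log^22x)$ precisely because $(\log 2m)^6/\log^22x\gg(\log 2m)^2$ there; outside this regime Lemma~\ref{lemma2'} is then safely applicable.

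This also corrects your account of the $(\log 2m)^6$. In the large-$x$ regime the dyadic estimates for the $E_2$- and $E_3$-type error sums each contribute only a single factor of $\log 2m$ via Lemma~\ref{Bm-upper-bound}; they do not nest to produce a sixth power. The exponent $6$ is forced entirely by the small-$x$ absorption step: one needs $xm^{-1}\log^22m\ll r_mx/\log^22x$ when $\log 2x\asymp\log^22m$, i.e.\ $\log^22m\cdot\log^22x\ll(\log 2m)^6$, and $\log^22x\asymp\log^42m$ makes $6$ the right exponent. So the bookkeeping you describe as the heart of the calculation is not where the $(\log 2m)^6$ arises; it arises from the initial reduction you set up incorrectly.
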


\begin{proof}
Since $B_m(x) \le x/m$, and each of the three main terms in Lemma \ref{lem-B(x)starter1} is trivially $\ll x m^{-1} \log 2x$, 
everything is absorbed by the error term as long as $\log 2x \ll \log^2 2m$.
Hence we may assume $\log 2x > C \log^2 2m$, for some sufficiently large constant $C$, as we estimate
the right-hand side of \eqref{eq:bm1}. In particular, we may assume hereafter that $x\ge 2$. 

If $m>1$, Lemma \ref{lemma2'} implies that
$$
\Phi_0(x/m, P^+(m)) = \frac{6}{\pi^2} \frac{x}{m} \prod_{p \leq P^+(m)} \left(1 + \frac{1}{p}\right)^{-1} 
+ O\left(\frac{x}{m \log^2 x}\right).
$$ 
This also holds for $m=1$ using a standard result on the distribution of squarefree numbers.

Now suppose that $mb \leq e^{\sqrt{\log x}}$. 
Since $2\le \theta(mb)\ll mb $, we can estimate $ \Phi_0(x/mb, \theta(mb))$ by Lemma \ref{lemma2'}. 
The cumulative error in \eqref{eq:bm1}
from $mb \leq e^{\sqrt{\log x}}$ is $\ll x m^{-1} / \log^2 x$.
 
Using Lemma \ref{phinot} and \eqref{eq:Phi}, in the range $e^{\sqrt{\log x}} < mb \leq \sqrt{x}$ we have
\begin{align*} 
 \chi_m(mb)& \Phi_0(x/mb, \theta(mb)) = \\ &x \chi_m(mb) \frac{e^\gamma}{mb} 
\omega\left(\frac{\log(x/mb)}{\log \theta(mb)}\right) \prod_{p \leq \theta(mb)} 
\left(1 - \frac{1}{p}\right) + O(E_1) + O(E_2) + O(E_3),
\end{align*} 
where 
$$
E_1=
\frac{x\chi_m(mb)}{mb\, \theta(mb)\log\theta(mb)},~~
E_2=\frac{x\chi_m(mb)e^{-\log(x/mb)/3\log\theta(mb)}}{mb(\log\theta(mb))^2},~~
E_3=\frac{\chi_m(mb)\theta(mb)}{\log\theta(mb)}.
$$

We wish to show that the sum of each $E_j$ for $e^{\sqrt{\log x}}<mb\le\sqrt{x}$ is $O(\frac{x \log 2m}{m \log^2 x})$.  
Since $mb\le\theta(mb)\ll mb$, this is immediate for $E_1$.  
The sum of $E_3$ is $\ll B_m(\sqrt{x}) \sqrt{x}/\log x$, which is acceptable by Lemma \ref{Bm-upper-bound}.
The argument for $E_2$ is a bit more delicate.  By Lemma \ref{Bm-upper-bound},
the sum of $\chi_m(mb)/mb$ for $mb$ in a dyadic interval $[2^j,2^{j+1})$ is at most
$B_m(2^{j+1})/2^j\ll (\log 2m)/(mj)$.
The contribution to $E_2$ from this interval is
$\ll (\log 2m) m^{-1} xe^{-c(\log x)/j}/j^3$ for some positive constant $c$.  Summing this over the larger
range $j\ge1$ gets an estimate of $O(\frac{x \log 2m}{m \log^2 x})$.

Finally, we replace the term $B_m(\sqrt{x})$ in \eqref{eq:bm1} with $O( \sqrt{x}/m)$.

Inserting each of these estimates into \eqref{eq:bm1}, and observing that each of the error terms is 
$O(\frac{x \log 2m}{m \log^2 x})$, produces our desired result.
\end{proof}

Our version of \cite[Lemma 5.4]{weingartner} is as follows.

\begin{lemma}\label{lem-B(x)simplification} 
For $m\ge 1$ we have
$$
\frac{1}{m} \prod_{p \leq P^+(m)} \left(1 + \frac{1}{p}\right)^{-1} =
 \sum_{b \geq 1 } \frac{\chi_m(mb)}{mb} \prod_{p \leq \theta(mb)} \left(1 + \frac{1}{p}\right)^{-1}.$$  
\end{lemma}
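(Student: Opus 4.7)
The plan is to obtain the identity by letting $x \to \infty$ in Lemma~\ref{lem-B(x)starter1}, using the $o(x)$ bound on $B_m(x)$ from Lemma~\ref{Bm-upper-bound}. After dividing through by $x$, both $B_m(x)/x$ and the error $O(r_m/\log^2 2x)$ tend to $0$, so the three remaining main terms must cancel in the limit. Since the first term is independent of $x$ and equals $(6/\pi^2)\cdot m^{-1}\prod_{p \leq P^+(m)}(1+1/p)^{-1}$, it suffices to show that as $x \to \infty$ the second sum approaches $-(6/\pi^2) S_m$ and the third sum is $o(x)$, where $S_m$ denotes the series on the right-hand side of the claimed identity.

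First, I would verify that $S_m$ converges, and moreover that its tail from $mb > e^{\sqrt{\log x}}$ is $O((\log 2m)/(m\sqrt{\log x}))$. Mertens gives $\prod_{p \leq y}(1+1/p)^{-1} \asymp 1/\log y$, and Lemma~\ref{Bm-upper-bound} gives $B_m(t) \ll t\log 2m/(m \log 2t)$; partial summation then reduces the tail of $S_m$ to an integral of the form $\int_{e^{\sqrt{\log x}}}^\infty (\log 2m)/(m\, t\log^2 t)\,dt = O((\log 2m)/(m\sqrt{\log x}))$. Consequently the second sum in Lemma~\ref{lem-B(x)starter1}, truncated at $e^{\sqrt{\log x}}$, converges to $S_m$ as $x \to \infty$. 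For the third sum I would use $\prod_{p \leq y}(1-1/p) = (6/\pi^2)\prod_{p \leq y}(1+1/p)^{-1}(1+O(1/y))$, which comes from $\prod_p(1-1/p^2) = 6/\pi^2$, together with the crude bound $e^\gamma\omega(u) = O(1)$ from \eqref{eq:omega}. This bounds the third sum by a constant times $\sum_{e^{\sqrt{\log x}} < mb \leq \sqrt{x}} \chi_m(mb)/(mb) \prod_{p \leq \theta(mb)}(1+1/p)^{-1}$, which is a tail of $S_m$ and hence $o(x)$.

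Inserting these estimates into Lemma~\ref{lem-B(x)starter1} and dividing by $x$, the equation $0 = (6/\pi^2)\bigl(m^{-1}\prod_{p \leq P^+(m)}(1+1/p)^{-1} - S_m\bigr)$ falls out, and the identity follows. The main obstacle is the third sum: the quantities $e^\gamma\omega(\log(x/mb)/\log\theta(mb))$ do not tend to $1$ uniformly in the range (they only do as the ratio $u \to \infty$), so a term-by-term match with the second sum is impossible. Fortunately we only need the third sum to be $o(x)$, which follows from boundedness of $e^\gamma\omega$ combined with the convergence of $S_m$ established above.
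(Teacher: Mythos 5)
Your proposal is correct and follows essentially the same route as the paper: divide Lemma~\ref{lem-B(x)starter1} by $x$, use Lemma~\ref{Bm-upper-bound} to show $B_m(x)/x \to 0$, show via partial summation and Mertens that the tail of $S_m$ beyond $e^{\sqrt{\log x}}$ is $o(1)$, bound the middle-range sum using $e^\gamma\omega(u) \ll 1$ together with the same Mertens-type estimate, and let $x\to\infty$. The paper writes the key bound as $\prod_{p\le\theta(mb)}(1-1/p) \ll 1/\log mb$ rather than relating it to $\prod(1+1/p)^{-1}$, but this is the same estimate in a thin disguise.
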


\begin{proof}
Let $m\ge 1$ be arbitrary but fixed. 
Dividing each term by $x$ in Lemma \ref{lem-B(x)starter1} and rearranging, 
we obtain, by Lemma \ref{Bm-upper-bound}, 
\begin{align*} 
\frac{6}{\pi^2} \frac{1}{m} \prod_{p \leq P^+(m)} \left(1 + \frac{1}{p}\right)^{-1} 
&= \frac{6}{\pi^2} \sum_{mb \leq e^{\sqrt{\log x}} } \frac{\chi_m(mb)}{mb} \prod_{p \leq \theta(mb)} 
\left(1 + \frac{1}{p} \right)^{-1} \\ 
&+ \sum_{e^{\sqrt{\log x}} \leq mb \leq \sqrt{x} } \frac{\chi_m(mb)}{mb} e^\gamma 
\omega\left(\frac{\log(x/mb)}{\log \theta(mb)}\right) \prod_{p \leq \theta(mb)} \left(1 - \frac{1}{p}\right) + o(1),
\end{align*} 
as $x \rightarrow \infty$. Since $e^\gamma \omega(\log(x/mb)/\log \theta(mb)) \ll 1$, and 
$\prod_{p \leq \theta(mb)}(1 - 1/p) \ll 1/\log mb$, it follows via 
partial summation and Lemma \ref{Bm-upper-bound}  that the second sum is $o(1)$. 
We can extend the first sum to include all $mb \geq 1$, since the contribution 
from those $mb$ with $mb > e^{\sqrt{\log x}}$ is $o(1)$. 
Dividing both sides of the equation by $6/\pi^2$ and letting $x \rightarrow \infty$ completes the proof. 
\end{proof}

Next, we prove a version of \cite[Lemma 5.5]{weingartner}.

\begin{lemma} 
\label{lem-5.5}
For $m\ge 1$, $x \geq 1$ we have 
$$
B_m(x) = x \sum_{b \geq 1 } \frac{\chi_m(mb)}{mb \log \theta(mb)} 
\left(e^{-\gamma} - \omega\left(\frac{\log(x/mb)}{\log \theta(mb)}\right)\right) + O\left(\frac{r_m x}{\log^22x}\right).
$$ 
\end{lemma}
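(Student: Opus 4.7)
The plan is to start from the three main terms of Lemma~\ref{lem-B(x)starter1}, use the identity in Lemma~\ref{lem-B(x)simplification} to collapse the first two into a tail sum, approximate the Euler products by their Mertens asymptotics, and finally consolidate the ranges of summation using the key fact that $\omega(u)=0$ for $u<1$.

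First, Lemma~\ref{lem-B(x)simplification} turns the difference of the first two main terms of Lemma~\ref{lem-B(x)starter1} into the single tail
$$
\frac{6}{\pi^2}\,x\sum_{mb>e^{\sqrt{\log x}}}\frac{\chi_m(mb)}{mb}\prod_{p\leq\theta(mb)}\left(1+\frac{1}{p}\right)^{-1}.
$$
I then invoke the identity $\frac{6}{\pi^2}\prod_{p\leq y}(1+1/p)^{-1}=\prod_{p\leq y}(1-1/p)\prod_{p>y}(1-1/p^2)$ together with Mertens' theorem in its sub-polynomial form $\prod_{p\leq y}(1-1/p)=\frac{e^{-\gamma}}{\log y}\bigl(1+O(\exp(-c\sqrt{\log y}))\bigr)$ (a consequence of PNT with the classical error term) and the observation $\prod_{p>y}(1-1/p^2)=1+O(1/(y\log y))$. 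This yields
$$
\frac{6}{\pi^2}\prod_{p\leq y}\left(1+\frac{1}{p}\right)^{-1}=\frac{e^{-\gamma}}{\log y}(1+E(y)),\quad e^\gamma\prod_{p\leq y}\left(1-\frac{1}{p}\right)=\frac{1}{\log y}(1+E(y)),
$$
with $E(y)\ll\exp(-c\sqrt{\log y})$. Applying the first estimate to the tail above (with $y=\theta(mb)$) and the second to the $\omega$-term of Lemma~\ref{lem-B(x)starter1} gives
$$
B_m(x)=xe^{-\gamma}\sum_{mb>e^{\sqrt{\log x}}}\frac{\chi_m(mb)}{mb\log\theta(mb)}-x\sum_{e^{\sqrt{\log x}}<mb\leq\sqrt{x}}\frac{\chi_m(mb)\,\omega(u_{mb})}{mb\log\theta(mb)}+O\!\left(\frac{r_m x}{\log^2 x}\right),
$$
where $u_{mb}:=\log(x/mb)/\log\theta(mb)$. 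A dyadic partial summation using Lemma~\ref{Bm-upper-bound} shows the Mertens-approximation errors are bounded by $O\bigl(xe^{-c'(\log x)^{1/4}}\log(2m)/m\bigr)$, which is well within the error term.

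Next, observe that for $mb>\sqrt{x}$ we have $\theta(mb)\asymp mb$ and $\log(x/mb)<\log mb\leq\log\theta(mb)$, hence $u_{mb}<1$ and $\omega(u_{mb})=0$. Thus the $\omega$-sum extends vacuously from $mb\leq\sqrt{x}$ to all $mb>e^{\sqrt{\log x}}$, and the two sums merge into
$$
x\sum_{mb>e^{\sqrt{\log x}}}\frac{\chi_m(mb)}{mb\log\theta(mb)}\bigl(e^{-\gamma}-\omega(u_{mb})\bigr).
$$
Finally, I extend the summation to all $b\geq 1$. For $mb\leq e^{\sqrt{\log x}}$ one has $u_{mb}\gg\sqrt{\log x}$, so by \eqref{eq:omega} the bound $|e^{-\gamma}-\omega(u_{mb})|\leq 1/\Gamma(u_{mb}+1)$ decays faster than any power of $\log x$; since $\sum_b\chi_m(mb)/(mb\log\theta(mb))\ll\log(2m)/m$ by Lemma~\ref{Bm-upper-bound}, the extension is negligible.

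The main obstacle is the Mertens step: the classical $O(1/\log y)$ relative error only yields an aggregate error $O(x\log(2m)/(m\log x))$, which exceeds $r_m x/\log^2 x$ once $\log x\gtrsim(\log 2m)^5$. Using the sub-polynomial form of Mertens (equivalently, relative error $O(1/\log^A y)$ for some $A>3$) coming from the prime number theorem is what makes the error absorbable.
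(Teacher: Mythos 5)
Your proof is correct and follows essentially the same route as the paper's: collapse the first two main terms of Lemma~\ref{lem-B(x)starter1} via Lemma~\ref{lem-B(x)simplification}, relate the two Euler products through the $6/\pi^2 = \prod_p(1-1/p^2)$ identity, invoke a PNT-strength form of Mertens' theorem, merge the two ranges of summation using $\omega(u)=0$ for $u<1$, and extend to all $b\ge1$ via \eqref{eq:omega}. The only cosmetic difference is ordering: the paper first converts both products to $\prod_{p\le\theta(mb)}(1-1/p)$ and merges the sums before applying Mertens once, whereas you apply Mertens to each product separately and then merge; these are equivalent, and your closing remark about why classical Mertens ($O(1/\log y)$ relative error) would be insufficient correctly identifies the same issue the paper addresses by invoking a strong Mertens bound.
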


\begin{proof}
As in the proof of Lemma \ref{lem-B(x)starter1}, we may assume $x\ge2$.
We use Lemma \ref{lem-B(x)simplification} to combine the first two terms in the expression in 
Lemma \ref{lem-B(x)starter1}, getting 
\begin{align*} 
B_m(x) = &\frac{6}{\pi^2} x \sum_{mb > e^{\sqrt{\log x}}} 
\frac{\chi_m(mb)}{mb} \prod_{p \leq \theta(mb)} \left(1 + \frac{1}{p}\right)^{-1} \\ 
&- x \sum_{e^{\sqrt{\log x}} < mb \leq \sqrt{x} } 
\frac{\chi_m(mb)}{mb} e^\gamma \omega\left(\frac{\log(x/mb)}{\log \theta(mb)}\right) 
\prod_{p \leq \theta(mb)} \left(1 - \frac{1}{p}\right) + O\left(\frac{r_m x}{\log^2 x}\right).
\end{align*} 
Note that $\frac{6}{\pi^2} = \prod_{p } \left(1 - \frac{1}{p^2}\right)$, so we have for any $n \geq 1$
$$
\frac{6}{\pi^2} \displaystyle\prod_{p \leq \theta(n)} \left(1 + \frac{1}{p}\right)^{-1} 
= \prod_{p \leq \theta(n)} \left(1 - \frac{1}{p}\right)\prod_{p>\theta(n)}\left(1-\frac1{p^2}\right)
=\prod_{p\le\theta(n)}\left(1-\frac1p\right)\cdot\left(1+O\left(\frac1n\right)\right).
$$ 
Thus, we have 
\begin{align*}
B_m(x) & =   x \sum_{mb > e^{\sqrt{\log x}} } \frac{\chi_m(mb)}{mb} \prod_{p \leq \theta(mb)} 
\left(1 - \frac{1}{p}\right) \\
& \quad - x\sum_{e^{\sqrt{\log x}} < mb \leq \sqrt{x} } \frac{\chi_m(mb)}{mb} 
e^\gamma \omega\left(\frac{\log x/mb}{\log \theta(mb)}\right) 
\prod_{p \leq \theta(mb)} \left(1 - \frac{1}{p}\right) + O\left(\frac{r_m x}{\log^2 x}\right) \\ 
& = x \sum_{mb \geq  e^{\sqrt{\log x}} } \frac{\chi_m(mb)}{mb} 
\left(1-e^\gamma\omega\left(\frac{\log(x/mb)}{\log\theta(mb)}\right)\right)
\prod_{p \leq \theta(mb)} 
\left(1 - \frac{1}{p}\right) 
+O\left(\frac{r_m x}{\log^2 x}\right),
\end{align*} 
using that $\omega(u)=0$ for $u<1$.
We use a strong form of Mertens' theorem (essentially, the prime number theorem)
to estimate the product over primes, which yields 
$$
\frac{e^{-\gamma}}{\log \theta(mb)}\left(1 + O\left(\frac{1}{\log^4 \theta(mb)}\right)\right).$$ 
Inserting this into our last expression for $B_m(x)$, we get
$$
B_m(x) = x \sum_{mb >e^{\sqrt{\log x}} } \frac{\chi_m(mb)}{mb} \frac{1}{\log \theta(mb)}
\left(e^{-\gamma}-\omega\left(\frac{\log(x/mb)}{\log \theta(mb)}\right)\right) + 
O\left(\frac{r_m x}{\log^2 x}\right).
$$ 
Using \eqref{eq:omega}, the sum may be extended to all $mb\ge1$ introducing an
acceptably small error, and so proving the lemma.
\end{proof}

The analogue of \cite[Lemma 5.7]{weingartner} is as follows.

\begin{lemma} 
\label{lem-5.6}
For $m\ge 1$, $x \geq 1$ we have 
$$
B_m(x) = x \int_1^\infty \frac{B_m(y)}{y^2 \log 2y} 
\left(e^{-\gamma} - \omega\left(\frac{\log(x/y)}{\log 2y}\right)\right) \mathrm{d}y + O\left(\frac{r_m x}{\log^22x}\right).
$$ 
\end{lemma}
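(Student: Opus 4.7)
The plan is to start from Lemma \ref{lem-5.5} and apply Abel summation to convert its sum into the desired integral, after first replacing $\log\theta(n)$ by $\log 2n$ in the summand. Set $\psi(y) := (y\log 2y)^{-1}(e^{-\gamma}-\omega(u(y)))$ with $u(y):=\log(x/y)/\log 2y$, and factor $\psi(y)=g(y)/y$ where $g(y):=(\log 2y)^{-1}(e^{-\gamma}-\omega(u(y)))$. The product rule yields $-\psi'(y)=\psi(y)/y - g'(y)/y$, and integration against $B_m(y)$ will produce the target integrand plus a residual $-\int_1^\infty B_m(y)g'(y)/y\,\mathrm{d}y$ that we need to show is $O(r_m/\log^2 2x)$. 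The principal difficulty is the term in $g'$ involving $\omega'(u(y))$ multiplied by $\log 2x$, which will be tamed by the support and decay properties of $\omega'$.

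Step 1 (replace $\log\theta$ by $\log 2y$). Since $\theta(y)\asymp y$ with $\theta(y)\ge y$, we have $\log\theta(y)=\log 2y+O(1)$, so the $\log$-prefactors of $\psi_\theta(n)$ and $\psi(n)$ differ by $O(1/(n\log^2 2n))$, while the $\omega$-arguments differ by $O(u(n)/\log 2n)$. For $n$ whose two arguments lie on the same side of the discontinuity of $\omega$ at $u=1$, the mean value theorem combined with $|\omega'(s)|\ll 1$ on $[1,2]$ and the Buchstab relation $\omega'(s)=(\omega(s-1)-\omega(s))/s$ for $s>2$ (yielding $|\omega'(s)|\ll 1/\Gamma(s)$ via \eqref{eq:omega}) gives $|\psi_\theta(n)-\psi(n)|\ll 1/(n\log^2 2n)$. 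The exceptional $n$, whose two arguments straddle $u=1$, necessarily lie in an interval $[c_1\sqrt{x},\,c_2\sqrt{x}]$ of multiplicative width $O(1)$, where the crude bound $|\psi_\theta(n)-\psi(n)|\ll 1/(n\log 2n)$ suffices. Dyadic summation together with Lemma \ref{Bm-upper-bound} bounds the total contribution by $O(\log 2m/(m\log^2 2x))\ll r_m/\log^2 2x$.

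Step 2 (Abel summation). Since $B_m(y)\psi(y)\to 0$ as $y\to\infty$,
$$
\sum_n \chi_m(n)\psi(n) = -\int_1^\infty B_m(y)\psi'(y)\,\mathrm{d}y = \int_1^\infty \frac{B_m(y)(e^{-\gamma}-\omega(u(y)))}{y^2\log 2y}\,\mathrm{d}y - \int_1^\infty \frac{B_m(y)g'(y)}{y}\,\mathrm{d}y,
$$
where the first integral is the target main term.

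Step 3 (bounding the error integral). Direct differentiation gives
$$
g'(y) = -\frac{e^{-\gamma}-\omega(u(y))}{y(\log 2y)^2}+\frac{\omega'(u(y))\log 2x}{y(\log 2y)^3}.
$$
The first summand, together with $B_m(y)\ll y\log 2m/(m\log 2y)$ and \eqref{eq:omega}, contributes $O(\log 2m/(m\log^2 2x))$ after partitioning the integration range by values of $u(y)$. For the second summand, $\omega'(u)=0$ for $u<1$ restricts the integration to $y\le\sqrt{x}$; after applying Lemma \ref{Bm-upper-bound} and changing variables via $\mathrm{d}u = -(\log 2x)/(y(\log 2y)^2)\,\mathrm{d}y$, the integral is bounded by
$$
\frac{\log 2m}{m}\int \frac{|\omega'(u)|}{(\log 2y)^2}\,\mathrm{d}u,
$$
with the factor $\log 2x$ absorbed into the Jacobian. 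Partitioning the $u$-range into intervals $[k,k+1]$ (on which $\log 2y\asymp (\log 2x)/k$ and $\int_k^{k+1}|\omega'(u)|\,\mathrm{d}u\ll 1/\Gamma(k)$) and summing the convergent series $\sum_k k^2/\Gamma(k)$ yields $O(\log 2m/(m\log^2 2x))$. Combining the three steps completes the proof.
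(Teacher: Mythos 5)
Your proposal is correct and follows essentially the same route as the paper: the paper's proof is a two-sentence sketch (replace $\theta(mb)$ by $2mb$ in Lemma \ref{lem-5.5}, then use partial summation to pass from $\chi_m$ to $B_m$, deferring the error-term bookkeeping to Lemmas 5.6--5.7 of \cite{weingartner}), and your three steps are exactly a fleshed-out version of that. The only points you gloss over are minor: the jump of $\omega$ at $u=1$ produces an extra boundary term $B_m(y_0)/(y_0\log 2y_0)$ at $y_0\approx\sqrt{x/2}$ in the Abel summation (which is $O(\log 2m/(m\log^2 2x))$ and hence harmless), and one should first dispense with the regime $\log 2x\ll\log^2 2m$ as in the earlier lemmas so that $r_m x/\log^2 2x$ can absorb trivial bounds.
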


\begin{proof}
First, replace the two instances of $\theta(mb)$ in Lemma \ref{lem-5.5}  by $2mb$. 
Second, use partial summation to replace $\chi_m$ by $B_m$. 
All new error terms introduced in these two steps turn out to be $\ll \frac{x \log 2m}{m (\log2x)^2}$.
We omit the details, since the calculations are identical to those in Lemmas 5.6 and 5.7 of \cite{weingartner}
\end{proof}

At this point we may conclude the proof of Theorem \ref{thm-starterasymp} by following the proof of \cite[Theorem 1.3]{weingartner},
replacing $t$ by $2$, $D(x,t)$ by $B_m(x)$, and $\alpha(t)$ by 
$$ \alpha_m = e^{-\gamma} \int_1^\infty \frac{B_m(y)}{y^2 \log 2y} dy \ll \frac{\log 2m}{m}.$$ 
All new error terms turn out to be $\ll r_m x /\log^2 2x$.


\section{Starters}\label{sec-starters}

When considering $\varphi$-practical numbers $n$ with a given squarefull part $s$, it is
natural to consider certain ``primitive" $\varphi$-practical numbers which
have squarefull part $s$, which we call {\em starters}.

\begin{definition}
A starter is a $\varphi$-practical number $m$ such that either $m/P^+(m)$ is not
$\varphi$-practical or $P^+(m)^2\mid m$.
A $\varphi$-practical number $n$ is said to have starter $m$
if $m$ is a starter, $m$ is an initial divisor of $n$, and $n/m$ is squarefree.  
\end{definition}

In some cases, it can be simple to characterize all of the starters with a given squarefull part. For example, $4$ is the only starter for $4$. Similarly, there are only three starters for $49$: $294=2\cdot3\cdot7^2$, $1470=2\cdot3\cdot5\cdot7^2$, and $735=3\cdot5\cdot7^2$. For other squarefull numbers, examining the corresponding set of starters becomes much more complicated. For example, there are infinitely many starters with squarefull part 9.

It is easy to see that each $\varphi$-practical number has a unique starter, so the
starters create a natural partition of the $\varphi$-practical numbers.
Note that in the notation of Section \ref{sec-freely}, with $\theta(x)=x+2$,
if $m$ is a starter, then $\mathcal{B}_m$
is the set of $\varphi$-practical numbers with starter $m$. 
Since we learned the asymptotics for each $B_m(x)$ in Theorem \ref{thm-starterasymp},
and since the sets $\mathcal{B}_m$, with $m$ running over starters, partition the
set of $\varphi$-practicals, it would seem that the proof of Theorem \ref{thm-main}
is now complete.  However, it remains to show that starters are so scarce that the sums 
$\sum c_m$ and $\sum r_m$ over starters $m$ are finite.  
We begin with the following corollary of Theorem \ref{thm-starterasymp}.
\begin{cor}
\label{cor-cmsum}
With the variable $m$ running over starters, we have $\sum c_m<\infty$.
\end{cor}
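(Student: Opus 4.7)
The plan is to use the partition of $\varphi$-practicals by starters to compare $\sum c_m$ with Thompson's upper bound from Theorem \ref{thm-thompson}. Since each $\varphi$-practical number $n$ has a unique starter $m$ (the initial divisor of $n$ obtained by stripping off the squarefree ``tail'' grown on top of the squarefull part), the sets $\mathcal{B}_m$ with $m$ running over starters partition the $\varphi$-practicals. Hence, for every $x\ge 2$,
\[
P_\varphi(x) \;=\; \sum_{m \text{ starter}} B_m(x).
\]

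First I would verify $c_m\ge 0$. This is immediate from Theorem \ref{thm-starterasymp}: since $B_m(x)\ge 0$, dividing $B_m(x) = c_m x/\log x + O(r_m x/\log^2 x)$ by $x/\log x$ and letting $x\to\infty$ forces $c_m\ge 0$. Next, fix an arbitrary positive integer $M$ and restrict the partition identity above to starters $m\le M$. Since there are only finitely many such $m$, Theorem \ref{thm-starterasymp} gives, uniformly in $x\ge 2$,
\[
\sum_{\substack{m\le M\\ m\text{ starter}}} B_m(x) \;=\; \frac{x}{\log x}\sum_{\substack{m\le M\\ m\text{ starter}}} c_m \;+\; O_M\!\left(\frac{x}{\log^2 x}\right),
\]
where the implied constant may depend on $M$ (through $\sum_{m\le M} r_m$), but that is harmless.

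On the other hand, Theorem \ref{thm-thompson} gives the unconditional upper bound $P_\varphi(x) \le \kappa_4\, x/\log x$, and by the partition,
\[
\sum_{\substack{m\le M\\ m\text{ starter}}} B_m(x) \;\le\; P_\varphi(x) \;\le\; \kappa_4\,\frac{x}{\log x}.
\]
Dividing the two displays above by $x/\log x$ and letting $x\to\infty$ yields
\[
\sum_{\substack{m\le M\\ m\text{ starter}}} c_m \;\le\; \kappa_4.
\]
Because this bound is independent of $M$ and the terms $c_m$ are nonnegative, letting $M\to\infty$ proves $\sum_{m\text{ starter}} c_m \le \kappa_4 < \infty$, as claimed.

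There is no real obstacle here; the content is entirely in the two inputs already granted (Theorem \ref{thm-thompson} and Theorem \ref{thm-starterasymp}) together with the partition property of starters recorded just before the corollary. The more substantive task, of course, is proving the analogous convergence of $\sum r_m$ over starters, which controls the error term and is needed for Theorem \ref{thm-main} itself; but that is not asserted in the present corollary.
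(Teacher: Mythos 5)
Your proof is correct and follows the same route the paper takes: truncate to finitely many starters $m\le M$, compare $\sum_{m\le M}B_m(x)\le P_\varphi(x)=O(x/\log x)$ via Theorem \ref{thm-thompson}, divide by $x/\log x$, let $x\to\infty$, and then let $M\to\infty$ using nonnegativity of $c_m$. The paper's proof is just a one-sentence compression of exactly this argument, so you've filled in the details rather than found a different path.
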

\begin{proof}
 Since we know from Theorem \ref{thm-thompson}
that the number of $\varphi$-practical numbers in $[1,x]$ is $O(x/\log x)$,
the corollary follows immediately from Theorem \ref{thm-starterasymp}
and the fact that the sets $\mathcal{B}_m$ are disjoint as $m$ varies over starters.
\end{proof}

Let 
$$
H(n)=\frac{n+1}{\varphi(n)}.
$$
For a starter $m>1$, let $\alpha(m)$ denote the largest proper initial divisor of $m$ that is
$\varphi$-practical.
For example, $\alpha(3^2\cdot5\cdot7)=1$ and $\alpha(3\cdot5^2\cdot7)=3$.
\begin{lemma}
\label{lem-starter}
If $m$ is a starter with squarefull part $s>1$, then $P^+(\alpha(m))< P^+(s)$.
\end{lemma}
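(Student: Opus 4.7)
The plan is to argue by contradiction via the unique‑starter property noted just after the definition in Section \ref{sec-starters}. Suppose $P^+(\alpha(m)) \geq q := P^+(s)$. Write $m = p_1^{a_1}\cdots p_k^{a_k}$ with $p_1<\cdots<p_k$, and let $i_0$ be such that $p_{i_0} = q$, so that $a_{i_0} \geq 2$ by the definition of $s$. The initial divisors of $m$ form the chain
$$
1 = m_0 \mid m_1 \mid \cdots \mid m_k = m, \qquad m_j := p_1^{a_1}\cdots p_j^{a_j},
$$
and because $\alpha(m)$ is a proper initial divisor, $\alpha(m) = m_j$ for some $j < k$. The assumption $p_j \geq q$ forces $j \geq i_0$.

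The key move is to apply existence of a (unique) starter to the $\varphi$‑practical number $m_j$: let $m'$ be its starter. Then $m' = m_{j'}$ for some $j' \leq j$ with
$$
m_j/m_{j'} \;=\; p_{j'+1}^{a_{j'+1}}\cdots p_j^{a_j}
$$
squarefree. Squarefreeness forces $a_\ell = 1$ for every $j' < \ell \leq j$; since $a_{i_0} \geq 2$ and $i_0 \leq j$, this pushes $i_0 \leq j'$. But then for every $\ell > j'$ we have $p_\ell > p_{i_0} = q = P^+(s)$, so $a_\ell = 1$ in $m$ as well, and consequently $m/m_{j'} = p_{j'+1}\cdots p_k$ is also squarefree. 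Thus $m_{j'}$ is a starter that is an initial divisor of $m$ with squarefree quotient, i.e., $m_{j'}$ is a starter of $m$. Since $m$ is trivially a starter of itself, the uniqueness of starters forces $m_{j'} = m$, contradicting $j' \leq j < k$.

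The principal obstacle is the index bookkeeping linking the two quotients: one must recognize that the squarefreeness of $m_j/m_{j'}$ already prevents the prime power $q^{a_{i_0}}$ from sitting in the quotient, forcing $m_{j'}$ to absorb it, and then that the hypothesis $P^+(s) = q$ makes every prime in $m$ beyond $p_{j'}$ automatically squarefree. Once these two observations are in hand, uniqueness of the starter does the rest.
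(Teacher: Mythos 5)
Your proof is correct. You argue by contradiction via the unique-starter property: under the hypothesis $P^+(\alpha(m)) \geq P^+(s)$, you produce two distinct starters of $m$ — namely $m$ itself and $m_{j'}$, the starter of $\alpha(m)$ — and the index bookkeeping verifying that $m/m_{j'}$ is squarefree is carried out carefully and correctly.

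The route is genuinely different from the paper's, though the two are closely related. The paper's proof is more direct: from $P^+(\alpha(m)) \geq P^+(s)$ it notes that $m/\alpha(m)$ is squarefree (every prime in it exceeds $P^+(s)$, hence occurs to the first power) and $>1$, and then — using that $m$ is weakly $\varphi$-practical, $\alpha(m)$ is $\varphi$-practical, and the recursive criterion for extending a $\varphi$-practical number by a new prime — walks the chain of initial divisors from $\alpha(m)$ up to $m$, showing each one is $\varphi$-practical. In particular $m/P^+(m)$ is $\varphi$-practical, and since $P^+(m) > P^+(s)$ one also has $P^+(m)^2 \nmid m$, so $m$ violates the definition of a starter. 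Your argument instead invokes the unique-starter property as a black box. That property is asserted (as "easy to see") rather than proved in the paper, and its proof ultimately rests on the same chain observation the paper's direct argument uses — so your route is logically sound but defers the essential content to a stated-but-unproven fact, while requiring more index bookkeeping than the paper's one-step argument.
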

\begin{proof}
Suppose $P^+(\alpha(m))\ge P^+(s)$.  Then $m/\alpha(m)$ is squarefree and $>1$.   Since
$m$ satisfies the weak $\varphi$-practical property and both $\alpha(m)$ and $m$
are $\varphi$-practical, we would have $m/P^+(m)$ being $\varphi$-practical,
contradicting the definition of a starter.
\end{proof}

\begin{thm}
\label{thm-startercond}
Let $m$ be a starter and write $m=ak$ where $a=\alpha(m)$.
Then $H(ak)\ge H(a)$ and if $d$ is an initial divisor of $k$ with $1<d<k$, then $H(ad)<H(a)$.
\end{thm}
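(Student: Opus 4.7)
Both inequalities follow from the classical sumset characterization of $\varphi$-practical numbers: for $a$ $\varphi$-practical and $n>1$ coprime to $a$ with $P^+(a)<P^-(n)$, the product $an$ is $\varphi$-practical if and only if, sorting the $\varphi$-values of divisors of $n$ as $1=b_0\le b_1\le\cdots\le b_r=\varphi(n)$, one has $b_{j+1}\le 1+a(b_0+\cdots+b_j)$ for each $j$ (this holds because $a$ being $\varphi$-practical lets each $\varphi(e_2)$ contribute with any multiplicity $0,1,\dots,a$ in subsums of $\{\varphi(e):e\mid an\}$). For $H(ak)\ge H(a)$, since $m=ak$ is $\varphi$-practical as a starter, the criterion's final inequality (at $j=r-1$, using $\sum_l b_l=k$) gives $\varphi(k)\le 1+a(k-\varphi(k))$, i.e., $(1+a)\varphi(k)\le 1+ak$, which rearranges to $H(ak)\ge H(a)$.

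For $H(ad)<H(a)$ when $d$ is initial in $k$ with $1<d<k$, observe that $ad$ is a proper initial divisor of $m$ with $a<ad<m$, so the maximality of $a=\alpha(m)$ forces $ad$ to not be $\varphi$-practical. I would argue by contradiction: assume $H(ad)\ge H(a)$, i.e.\ $\varphi(d)\le 1+a(d-\varphi(d))$, and aim to show $ad$ must be $\varphi$-practical. This hypothesis is exactly the final-step inequality for the sorted $\varphi$-sequence $1=b'_0\le\cdots\le b'_s=\varphi(d)$ of divisors of $d$, so the remaining task is to verify the earlier inequalities $b'_{j+1}\le 1+a(b'_0+\cdots+b'_j)$ for $j<s-1$.

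The main obstacle is verifying those intermediate inequalities from just the final one. My plan is induction on the number of distinct prime factors $i$ of $d=p_1^{e_1}\cdots p_i^{e_i}$. For a prime power $d=p^e$, the identity $\sum_{j\le l}\varphi(p^j)=p^l$ reduces each criterion inequality to $p\le a+1+p^{-l}$, which is tightest at the final step, so the final step forces all earlier ones. For $i\ge 2$, write $d=d'p^e$ with $p$ the largest prime of $d$; the sorted $\varphi$-sequence for $d$ interlaces the sequence for $d'$ with scaled copies (indexed by powers $p^f$, $1\le f\le e$), and the intermediate inequalities reduce --- via careful tracking of positions in the interlaced sequence --- to a combination of the inductive hypothesis for $d'$ and constraints supplied by the $\varphi$-practicality of $m=ak$ (which controls divisors of $k$ beyond $d$ whose $\varphi$-values lie below $\varphi(d)$). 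This interleaving case analysis is the delicate heart of the argument; once completed, $ad$ is $\varphi$-practical, contradicting $\alpha(m)=a$, whence $H(ad)<H(a)$.
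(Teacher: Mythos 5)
Your proof of the first inequality $H(ak)\ge H(a)$ is correct and is essentially equivalent to the paper's argument, restated in a cleaner ``greedy rolling'' form: with $S(a)=[a]$ and $S(ak)=\sum_{d\mid k}\varphi(d)[a]$, the criterion (sorted $\varphi$-values $b_j$ of divisors of $k$ satisfy $b_{j+1}\le 1+a\sum_{l\le j}b_l$ for all $j$) is what the paper packages in Lemma~\ref{lem-sumofsets} together with Lemmas~\ref{lem-apq} and~\ref{lem-npnu}. Reading off the final inequality and using $\sum_{d\mid k}\varphi(d)=k$ gives $H(ak)\ge H(a)$; that step is fine.

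The second part, however, has a genuine gap. You correctly reduce to showing that if $d$ is an initial divisor of $k$ with $1<d<k$ and $H(ad)\ge H(a)$, then $ad$ is $\varphi$-practical; and you correctly identify that this amounts to deducing \emph{all} the intermediate inequalities $b'_{j+1}\le 1+a\sum_{l\le j}b'_l$ from the single final one. But the passage that is supposed to supply this --- ``the intermediate inequalities reduce, via careful tracking of positions in the interlaced sequence, to a combination of the inductive hypothesis for $d'$ and constraints supplied by the $\varphi$-practicality of $m$'' --- is precisely where the whole technical content of the theorem lives, and it is never carried out. Worse, the implication cannot hold for arbitrary $d$ with $H(ad)\ge H(a)$: take $a=1$ and $d=1365\cdot q$ where $q$ is any prime $>1367$ (e.g.\ $q=1373$). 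Then $\varphi(d)/d=(576/1365)(1-1/q)<1/2$, so $H(d)>2=H(1)$ and the final inequality holds, but at the transition in the sorted $\varphi$-sequence from divisors of $1365$ (which sum to $1365$) to the smallest divisor involving $q$ (namely $q-1>1366$) the criterion fails, so $d$ is not $\varphi$-practical. Thus the intermediate inequalities do \emph{not} follow from the final one alone; one must exploit the starter structure (that $p_1=a+2$, $\nu_1\ge 2$, that each $n_i$ is \emph{not} $\varphi$-practical, and in particular the bound $p_{i+1}\le g_i+2$ that comes from $m=ak$ covering the gap $g_i+1$). This is exactly what the paper's Lemmas~\ref{lem-apq} and~\ref{lem-npnu} establish inductively, showing $S(n_i)=[g_i]+h_i[a]$ along the chain $n_i=ak_i$. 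Your proposal names the right obstacle but leaves it unresolved, so the proof is incomplete as written.
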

This theorem will be proved in the next section.

\begin{cor}
\label{lem-starter2}
Let $m$ be a starter with squarefull part $s>1$, let $a=\alpha(m)$, and write $m=ak$.  Then $H(ak)>H(a)$,
$k/\varphi(k)\ge1+1/a$,
and if $d$ is an initial divisor of $k$, $d<k$, then $d/\varphi(d)<1+1/a$.
\end{cor}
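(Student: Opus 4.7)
My plan is to reduce the whole corollary to one short divisibility argument modulo $P^+(s)$, after first dispatching the third assertion.

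Since $a=\alpha(m)$ is an initial divisor of $m$, we have $\gcd(a,k)=1$, so $\varphi(ak)=\varphi(a)\varphi(k)$. The third assertion is then easy: for $d=1$ we have $d/\varphi(d)=1<1+1/a$ trivially, while for an initial divisor $d$ of $k$ with $1<d<k$, Theorem \ref{thm-startercond} gives the strict inequality $H(ad)<H(a)$, which upon clearing denominators (using $\gcd(a,d)=1$) reads $ad+1<(a+1)\varphi(d)$, and in particular $ad<(a+1)\varphi(d)$, i.e.\ $d/\varphi(d)<1+1/a$.

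Next I would observe that the first two assertions are equivalent. Expanding $H(ak)>H(a)$ via $\varphi(ak)=\varphi(a)\varphi(k)$ yields $ak+1>(a+1)\varphi(k)$, and since both sides are integers this is the same as $(a+1)\varphi(k)\le ak$, which in turn rewrites as $k/\varphi(k)\ge (a+1)/a=1+1/a$. Theorem \ref{thm-startercond} already furnishes the weak inequality $H(ak)\ge H(a)$, equivalently $(a+1)\varphi(k)\le ak+1$; so all that remains is to exclude the borderline case $(a+1)\varphi(k)=ak+1$.

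This last step, which I expect to be the main obstacle, is where Lemma \ref{lem-starter} enters. Set $p_0:=P^+(s)$. By Lemma \ref{lem-starter} we have $P^+(a)<p_0$. Since $\gcd(a,k)=1$, the squarefull part of $m=ak$ factors as the product of the squarefull parts of $a$ and $k$, and every prime in the squarefull part of $a$ is at most $P^+(a)<p_0$; hence $p_0$ must appear in the squarefull part of $k$, i.e.\ $p_0^2\mid k$. Consequently $p_0\mid k$ and $p_0\mid\varphi(k)$, so $(a+1)\varphi(k)\equiv 0\pmod{p_0}$ while $ak+1\equiv 1\pmod{p_0}$. Since $p_0\ge 2$, these residues disagree, ruling out the equality $(a+1)\varphi(k)=ak+1$ and forcing $(a+1)\varphi(k)\le ak$, which delivers both remaining assertions of the corollary.
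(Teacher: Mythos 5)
Your proof is correct and follows essentially the same route as the paper's: Theorem \ref{thm-startercond} supplies the inequality $H(ak)\ge H(a)$ and the third assertion, the integer-valued rewriting converts $H$-inequalities into $b/\varphi(b)$-inequalities, and Lemma \ref{lem-starter} plus the squarefull part $s>1$ forces $P^+(s)^2\mid k$, which rules out the borderline case. The one cosmetic difference is that you carry out the modular contradiction at the specific prime $p_0=P^+(s)$ directly, whereas the paper first observes that $ak+1=(a+1)\varphi(k)$ would force $k$ squarefree (an implicit use of the same congruence) and then contradicts squarefreeness via $P^+(s)^2\mid k$; the content is identical.
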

\begin{proof}
Theorem \ref{thm-startercond} gives that $H(ak)\ge H(a)$.  Suppose that $H(ak)=H(a)$.
Then $ak+1=(a+1)\varphi(k)$, which implies that $k$ is squarefree. On the other hand, by Lemma \ref{lem-starter}, $P^+(a) \leq P^+(s)$. Since $m = ak$ and $s \mid m$, then $P^+(a)$ must divide $k$ and it cannot divide $a$, hence $P^+(s) \mid m$. Since $s$ is the squareful part of $m$ then we must have $P^+(s)^2\mid k$, which means that $k$ is not squarefree.  This contradiction shows that $H(ak)>H(a)$.

Suppose the integer $b$ is coprime to $a$.  The condition $H(ab)>H(a)$ is equivalent to the condition 
$(ab+1)/\varphi(b) > a+1$.
Since $a+1$ is an integer and $(ab+1)/\varphi(b)$ is a rational number with denominator in lowest
terms a divisor of $\varphi(b)$, it follows that $H(ab)>H(a)$ is equivalent to $ab/\varphi(b)\ge a+1$,
which is equivalent to $b/\varphi(b)\ge 1+1/a$.  We have shown that $H(ak)>H(a)$, so that
$k/\varphi(k)\ge1+1/a$.  Further, if $d$ is an initial divisor of $k$ with $1<d<k$, the condition $H(ad)<H(a)$ from
Theorem \ref{thm-startercond} implies
that $d/\varphi(d)<1+1/a$.  This also holds for $d=1$, so the corollary is proved.
\end{proof}

\begin{thm}
\label{thm-startercount}
The number of starters $m\le x$ is at most
$$ x\exp\Bigl\{-\Bigl(\frac{\sqrt{2}}4+o(1)\Bigr)\sqrt{\log x \log \log x}\Bigr\} \qquad (x\to \infty).$$
\end{thm}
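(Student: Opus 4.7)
The plan is to combine the structural description of starters from Corollary \ref{lem-starter2} with a dichotomy based on the size of the squarefull part of $m$. For any starter $m > 1$, writing $m = ak$ with $a = \alpha(m)$, the corollary gives that $a$ is $\varphi$-practical, $\gcd(a,k) = 1$, $P^+(a) < P^-(k)$, $k/\varphi(k) \ge 1 + 1/a$, and every proper initial divisor $d < k$ of $k$ satisfies $d/\varphi(d) < 1 + 1/a$. Moreover, by Lemma \ref{lem-starter}, the squarefull part $s > 1$ of $m$ satisfies $P^+(s) > P^+(a)$, so the prime $P^+(s)$ together with its square appear in $k$.

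I would choose the parameter $y = \exp\bigl(\tfrac{\sqrt 2}{2}\sqrt{\log x \log\log x}\bigr)$ and split the starters $m \le x$ into two classes. In the first class, $m$ has a squarefull divisor (such as $s$ itself) at least $y$. Using the standard bound $O(T^{1/2})$ for the count of squarefull integers up to $T$ together with partial summation,
\[
\#\bigl\{m \le x : \exists\,\text{squarefull } d \mid m,\ d \ge y\bigr\} \ \ll\ \sum_{\substack{d \ge y \\ d \text{ squarefull}}} \frac{x}{d} \ \ll\ \frac{x}{\sqrt y}\ =\ x\exp\!\Bigl\{-\tfrac{\sqrt 2}{4}\sqrt{\log x \log\log x}\Bigr\},
\]
which matches the target bound. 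In the second class, every squarefull divisor of $m$ is $< y$, so every prime $p$ with $p^2 \mid m$ satisfies $p^2 \le s < y$, whence $p < \sqrt y$. In particular $P^+(s) < \sqrt y$, and Lemma \ref{lem-starter} forces $P^+(a) < \sqrt y$. Thus $a$ is both $\sqrt y$-smooth and $\varphi$-practical. For each such $a$, I would bound the number of valid $k \le x/a$ using the sharp threshold $k/\varphi(k) \ge 1 + 1/a$ and the initial-divisor minimality from Corollary \ref{lem-starter2}, which forces $P^+(k) \le (1+1/a)/(1+1/a - f)$, where $f = d_{r-1}/\varphi(d_{r-1})$ and $d_{r-1}$ is the penultimate initial divisor of $k$. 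Summing the resulting bound over $\sqrt y$-smooth $\varphi$-practical $a$ (using the density $O(T/\log T)$ from Theorem \ref{thm-thompson}) should produce a matching contribution.

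The main obstacle is this second class. The crude estimate $\#\{k \le x/a\} \le x/a$, summed over $\sqrt y$-smooth $\varphi$-practical $a \le x$, gives only $\ll x\log y \asymp x\sqrt{\log x \log\log x}$, which is far too weak. To close the gap one must exploit the minimality condition on the initial divisors of $k$ to pin down its multiplicative structure tightly: once $a$ and the penultimate initial divisor of $k$ are fixed, $P^+(k)$ lies in a narrow range, so $k$ ranges over a sparse set. Combining this with the $\varphi$-practical density and optimizing $y$ should produce the exponent $\sqrt 2/4$ in the final bound.
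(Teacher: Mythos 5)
Your outer dichotomy is exactly right and matches the first reduction in the paper's proof: with $y\asymp L^{\sqrt 2/2}$ where $L=\exp(\sqrt{\log x\log\log x})$, the count of $m\le x$ with a squarefull divisor $\ge y$ is $\ll x/\sqrt y = x/L^{\sqrt 2/4}$, and in the complementary class Lemma \ref{lem-starter} forces $P^+(\alpha(m))<P^+(s)<\sqrt y$. The gap is entirely in the second class, and you yourself flag it, but the route you sketch for closing it is not the one that works. Pinning $P^+(k)$ to a narrow window via $k/\varphi(k)\ge 1+1/a$ and the penultimate initial divisor does not, by itself, produce a factor as large as $L^{\sqrt 2/4+o(1)}$ of savings for each $a$: even after you fix $a$ and the penultimate divisor $d_{r-1}$, the admissible top prime $P^+(k)$ still ranges over an interval containing $\gg x/(a\,d_{r-1}\log x)$ primes, and summing the number of choices for $d_{r-1}$ reintroduces all the lost entropy. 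The paper needs two additional ideas that are absent from your sketch. First, one restricts (via de Bruijn's smooth-number bound) to $P^+(m)>L^{\sqrt 2}+1$; only then does Corollary \ref{lem-starter2} give the tight squeeze $1+1/a\le k/\varphi(k)<(1+1/a)(1+1/L^{\sqrt 2})$, which is the workhorse inequality. Second, the actual counting step is an Erd\H{o}s-style extraction argument rather than a direct bound on $k$: one shows that $k$ has a unitary divisor $d$ in $I=[L^{\sqrt 2/4},L^{\sqrt 2/2}]$, considers the map $k\mapsto k/d$ for the least such $d$, and proves it is $L^{o(1)}$-to-one because the squeeze forces $\rad(d)=\rad(d')$ whenever $k/d=k'/d'$, at which point the ELP/Pollack bound on integers with prescribed radical in a bounded interval applies. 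Since $k/d\le x/(aL^{\sqrt 2/4})$, this yields $S_a(x)\le x/(aL^{\sqrt 2/4+o(1)})$ uniformly for $a\le L^{\sqrt 2/6}$. There is also a range split on $a$ you omit: for $a>L^{\sqrt 2/6}$ the paper observes that $(a+2)^2\mid m$ (so $m$ lies in a residue class modulo $a(a+2)^2$), giving $\sum_{a>L^{\sqrt 2/6}}x/(a(a+2)^2)\ll x/L^{\sqrt 2/3}$. Without this split, simply summing $1/a$ over $\varphi$-practical $a\le x$ costs a $\log\log x$ factor with no compensating gain. In short, the framing is correct, the first-class estimate is correct, but the heart of the theorem — the near-injective divisor-extraction map together with the radical-multiplicity bound — is missing, and the substitute you propose would not close the gap.
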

\begin{proof}  
Let $x$ be large and let $L=\exp(\sqrt{\log x \log\log x})$.
By \cite{DeB} (see in particular, equation (1.6)), the number of integers $m\le x$ with $P^+(m)\le L^{\sqrt{2}}+1$
is at most $x/L^{\sqrt{2}/4+o(1)}$ as $x\to\infty$, so we may assume that $P^+(m)>L^{\sqrt{2}}+1$.
Since the number of squarefull numbers at most $t$ is $O(t^{1/2})$, the number
of integers $m\le x$ divisible by a squarefull number at least $L^{\sqrt{2}/2}$ is
$O(x/L^{\sqrt{2}/4})$, and so we may assume that the squarefull part of $m$ is
smaller than $L^{\sqrt{2}/2}$.  In particular, we may assume that $P^+(m)^2\nmid m$.
Denote the set of starters $m\le x$ which satisfy these properties by $\mathcal{S}(x)$.
For a $\varphi$-practical number $a$, let
$$
\mathcal{S}_{a}(x)=\{m>1: m\in\mathcal{S}(x),~ \alpha(m)=a\},\quad
S_{a}(x)=\#\mathcal{S}_a(x).
$$
If $m\in\mathcal{S}_a(x)$, then $q=a+2$ is a prime divisor of $m$, and in
fact $q^2\mid m$.  (If $r=P^-(m/a)<a+2$ and $r^j\|m$, then $ar^j$ is $\varphi$-practical.
If we then have $ar^j=m$, then $j=1$ and $m$ is not a starter, or $j>1$ and
$m\not\in\mathcal{S}(x)$.  And if $ar^j<m$, then $ar^j\mid\alpha(m)=a$,
again a contradiction.  So $P^-(m/a)=a+2=q$, and if $q^2\nmid m$, then
$aq$ is $\varphi$-practical, again leading to a contradiction.)
The number of starters $m\le x$ with $a>L^{\sqrt{2}/6}$ is therefore
\begin{equation*}
 \sum_{a>L^{\sqrt{2}/4}}S_a(x) \le \sum_{a>L^{\sqrt{2}/6}} \frac{x}{a(a+2)^2} \ll \frac{x}{L^{\sqrt{2}/3}}.
\end{equation*}
We will show that for all $\varphi$-practical numbers 
$a\le L^{\sqrt{2}/6}$, we uniformly have 
\begin{equation}\label{eq-Sa}
S_{a}(x)\le \frac{x}{a L^{\sqrt{2}/4+o(1)}},\qquad(x\to\infty).
\end{equation}
The desired result then follows from summing over $a$. 
It remains to establish \eqref{eq-Sa}, the proof of which is modeled after \cite{erdos}.

For a $\varphi$-practical number $a$, let $m\in\mathcal{S}_{a}(x)$ and 
write $m=ak$, where $P^+(a)<P^-(k)$.  
Since $m\in\mathcal{S}(x)$, it follows that $p=P^+(k)>L^{\sqrt{2}}+1$.
Moreover, we may assume that the squarefull part of $k$ is $< L^{\sqrt{2}/2}$. 
We write $k=pw$ and use the properties that
$k/\varphi(k)\ge1+1/a$ and $w/\varphi(w)<1+1/a$ (cf.\ Corollary \ref{lem-starter2}).
We have
\begin{equation}
\label{eq-squeeze}
1+\frac1a
\le \frac{k}{\varphi(k)}=\frac{w}{\varphi(w)}\left(1+\frac1{p-1}\right)
<\left(1+\frac1a\right)\left(1+\frac1{p-1}\right)<\left(1+\frac1a\right)\left(1+\frac1{L^{\sqrt{2}}}\right).
\end{equation}
We claim that $k$ has a divisor $d$ in $I:=[L^{\sqrt{2}/4},L^{\sqrt{2}/2}]$, with
$\gcd(d,k/d)=1$.  This holds if $k$ has a prime factor in $I$, since
it must appear just to the first power in $k$.  It also holds if the squarefull
part of $k$ is $\ge L^{\sqrt{2}/4}$.  So, assume that 
$k$ has no prime divisor in $I$ and that its squarefull part is smaller
than $L^{\sqrt{2}/4}$.  Write $k=uv$ where $P^+(u)< L^{\sqrt{2}/4}$ and $P^-(v)>L^{\sqrt{2}/2}$.  
Since $v$ has at most $\log x/\log L $ prime factors, it follows that
$$
\frac v{\varphi(v)}\le 1+\frac{\log x}{L^{\sqrt{2}/2}}
$$
for all sufficiently large $x$.  Now $p>L^{\sqrt{2}}> P^+(u)$, so $p$ does 
not divide $u$ and $u\mid w$.
Hence $u/\varphi(u)\le w/\varphi(w)<1+1/a$, which implies $u/\varphi(u)\le 1+1/a-1/(au)$. 
Also, $uv/\varphi(uv)\ge1+1/a$, which means that
$$
\left(1+\frac1a-\frac1{au}\right)\left(1+\frac{\log x}{L^{\sqrt{2}/2}}\right)\ge1+\frac1a,
$$
and so by our upper bound on $a$,
$$
u>\frac{L^{\sqrt{2}/2}+\log x}{(a+1)\log x}\ge L^{\sqrt{2}/4}.
$$
Since $P^+(u)< L^{\sqrt{2}/4}$ and the squarefull part of $u$ is $<L^{\sqrt{2}/4}$, 
it follows that $u$, and hence $k$, has a divisor $d$ in $I$ with
$\gcd(d,k/d)=1$, as claimed. 

For the elements $ak$ of $\mathcal{S}_{a}(x)$, we consider the
map which takes $k$ to $k/d$, where $d$ is the least divisor of $k$ in
$I$ that is coprime to $k/d$.
We claim this map is at most $L^{o(1)}$-to-one, as $x\to\infty$.  
For suppose $k\ne k'$ and $k/d=k'/d'$.
Then
\begin{equation}
\label{eq:rad}
\frac{k/\varphi(k)}{k'/\varphi(k')}=\frac{d/\varphi(d)}{d'/\varphi(d')}.
\end{equation}
If the right side of \eqref{eq:rad}
is not 1, assume without loss of generality that it is $>1$,
so that it is greater than 
$1+1/(dd')\ge 1+1/L^{\sqrt{2}}$.
But by \eqref{eq-squeeze}, the left side is less than $1+1/L^{\sqrt{2}}$.  
This contradiction shows that for a given pair $k,d$,
all pairs $k',d'$ that arise in our problem with $k/d=k'/d'$ 
have $d/\varphi(d)=d'/\varphi(d')$.  That is, $\rad(d')=\rad(d)$, where
$\rad(n)$ is the largest squarefree divisor of $n$.  It is shown in the
proof of \cite[Theorem 11]{ELP} (also see \cite[Lemma 4.2]{Poll}) that
the number of $d'\in I$ with this property is at most $L^{O(1/\log\log x)}$,
uniformly for $d\in I$, which proves our assertion about the map $k\mapsto k/d$.
Now $k/d\le x/(aL^{\sqrt{2}/4})$, which establishes 
\eqref{eq-Sa} and so completes the proof of the theorem.
\end{proof}

We are now ready to complete the proof of Theorem \ref{thm-main}.
By Theorem \ref{thm-starterasymp} and the discussion at the beginning of this section, we have, for $x\ge 2$,
$$P_\varphi(x) = \sum_{m\ge 1} B_m(x) =  \sum_{m\ge 1} \left(\frac{c_m x}{\log x}+ O\left(\frac{r_m x}{\log^2 x}\right)\right),$$
where $m$ runs over starters and $r_m=(\log 2m)^6 /m$. 
We have $C:= \sum_{m\ge 1} c_m < \infty$ by Corollary \ref{cor-cmsum},
while $\sum_{m\ge 1} r_m < \infty$ follows from Theorem \ref{thm-startercount} and partial summation.
Thus
$$ P_\varphi(x) = \frac{C x}{\log x} + O\left(\frac{x}{\log^2 x}\right),$$
where $C>0$, by Theorem \ref{thm-thompson}.

\section{Proof of Theorem \ref{thm-startercond}}

We begin with some notation and lemmas. For sets $A,B\subset\mathbb{R}$, and $\lambda\in \mathbb{R}$, let $A+B=\{a+b: a\in A, \ b\in B\}$,
$AB=\{ab:a\in A,\ b\in B\}$ and $\lambda A=\{\lambda\}A$.
For a nonnegative integer $n$, we let $[n]$ denote the set $\{0,1,\dots, n\}$.
\begin{lemma}
\label{lem-sumofsets}
Let $g,a,h$ denote nonnegative integers, with $a>0$.  Then
$[g]+h[a]=[g+h a]$ if and only if
\begin{equation}
\label{cond1}
h \le g+1.
\end{equation}
\end{lemma}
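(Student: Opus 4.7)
The plan is a short combinatorial argument on sumsets of integer intervals. First I would note that $[g]+h[a]\subseteq [g+ha]$ always holds, since every element of the sumset lies between $0$ and $g+ha$; hence the content of the lemma is to decide when every integer in $[g+ha]$ actually arises as $i+hj$ with $0\le i\le g$ and $0\le j\le a$.

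For the necessity of $h\le g+1$, I would exhibit a missing value when $h\ge g+2$. The natural candidate is $n=g+1$, which lies in $[g+ha]$ because $ha\ge 1$. But any representation $n=i+hj$ must have either $j=0$, forcing $n\le g$, or $j\ge 1$, forcing $n\ge h\ge g+2$; neither is compatible with $n=g+1$, so $g+1\notin [g]+h[a]$.

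For sufficiency, assume $h\le g+1$. The degenerate case $h=0$ is immediate because $h[a]=\{0\}$ and $g+ha=g$. For $h\ge 1$, given $n\in[g+ha]$, I would set $j=\min(\lfloor n/h\rfloor,a)$ and $i=n-hj$, then verify $0\le i\le g$ by splitting into two cases: if $j=\lfloor n/h\rfloor\le a$, then $0\le i\le h-1\le g$; if the cap at $a$ is active, then $i=n-ha\le g$ by the bound $n\le g+ha$. In either case $(i,j)\in [g]\times[a]$ provides the required representation.

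I do not anticipate any genuine obstacle here: the statement amounts to the standard observation that the translated intervals $hj+[g]$ for $j=0,1,\ldots,a$ cover $[g+ha]$ precisely when consecutive intervals overlap or touch, which is exactly the inequality $h\le g+1$. The only mild care needed is the boundary case $h=0$, which must be handled separately so that the division by $h$ in the sufficiency argument is legitimate.
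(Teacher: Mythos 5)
Your proof is correct and takes essentially the same approach as the paper: both arguments observe that the sumset is the union of the translated intervals $jh + [g]$ for $j=0,\dots,a$, and that these intervals tile $[g+ha]$ exactly when $h\le g+1$; the paper phrases this as consecutive intervals overlapping, while you give the equivalent explicit representation $j=\min(\lfloor n/h\rfloor,a)$, $i=n-hj$. The necessity direction (exhibiting $g+1$ as a missing value when $h\ge g+2$) is identical.
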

\begin{proof}
If $h\ge g+2$ then $g+1\notin[g]+h[a]$, so assume \eqref{cond1} holds.
The sumset consists of all of the integers in the intervals
$[ih,g+ih]$ for $i=0,1,\dots,a$.  The condition \eqref{cond1}
implies that for $i<a$, $(i+1)h\le g+ih+1$, so these intervals
cover every integer from 0 to $g+h a$.
\end{proof}

Let $S(n)$ denote the set of all sums of totients of distinct divisors of $n$, that is 
$S(n):=\left\{ \sum_{d|n} \phi(d) \varepsilon(d) : \varepsilon(d)\in \{0,1\} \right\}.$
Note that if $\gcd(m,n)=1$ then $S(mn)=S(m)S(n)$.  Also note that if $g,h,a$ are nonnegative
integers then for each positive integer $n$,
\begin{equation}
\label{largest}
S(n)=[g]+h[a]\quad\hbox{implies}\quad n=g+ha,
\end{equation}
as can be seen by examining the largest member of the two sets.

\begin{lemma}
\label{lem-apq}
Let $a$ be $\phi$-practical, $a+2=p<q\le a p+2$, $n=ap^\nu q^\mu$, where $\nu\ge 2$, $\mu\ge 1$. Then
$S(n) = [n-a\varphi(n/a)]+\varphi(n/a)[a]$.
\end{lemma}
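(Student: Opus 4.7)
My strategy is to rewrite $S(n)$ as a Minkowski sum of scaled copies of $[a]$, one for each divisor of $M:=n/a=p^\nu q^\mu$, and then apply Lemma~\ref{lem-sumofsets} iteratively. Since $a$ is $\varphi$-practical we have $S(a)=[a]$; and since $\gcd(a,pq)=1$, each divisor of $n$ factors uniquely as $d_a d_M$ with $d_a\mid a$ and $d_M\mid M$, with $\varphi$ multiplying across the factorization. Grouping $\sum_{d\mid n}\varphi(d)\varepsilon(d)$ by $d_M$ and using that the inner sums $\sum_{d_a\mid a}\varphi(d_a)\varepsilon(d_a d_M)$ range independently over $[a]$ as $d_M$ varies, I obtain
\[
S(n)\;=\;\sum_{d\mid M}\varphi(d)\,[a].
\]
Peeling off the term $d=M$ and noting $\varphi(M)=\varphi(n/a)$, $aM-a\varphi(M)=n-a\varphi(n/a)$, reduces the lemma to
\[
\sum_{d\mid M,\,d<M}\varphi(d)\,[a]\;=\;[\,a(M-\varphi(M))\,].\qquad(\ast)
\]

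For $(\ast)$, I would list the divisors of $M$ other than $M$ in increasing order $1=d_1<d_2<\cdots<d_K$ and prove by induction on $k$ that $\sum_{i\le k}\varphi(d_i)[a]=[a\sum_{i\le k}\varphi(d_i)]$. By Lemma~\ref{lem-sumofsets}, each inductive step reduces to verifying
\[
\varphi(d_{k+1})\;\le\;a\!\sum_{i\le k}\varphi(d_i)+1.
\]
Every divisor of $d_{k+1}$ smaller than $d_{k+1}$ already lies among $d_1,\dots,d_k$, so $\sum_{i\le k}\varphi(d_i)\ge d_{k+1}-\varphi(d_{k+1})$, making a convenient sufficient version of the step condition the \emph{self-bound} $(a+1)\varphi(d_{k+1})\le a\,d_{k+1}+1$. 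Using $p=a+2$ (hence $(a+1)^2=ap+1$), the self-bound holds with equality at $d=1$ and $d=p$; and the hypothesis $q\le ap+2=(p-1)^2+1$ is exactly what makes $d=pq$ work (with equality at the boundary $q=ap+2$).

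The main obstacle is that the self-bound fails for $d=p^i$ with $i\ge 2$, where a direct computation gives $(a+1)\varphi(d)-a\,d=p^{i-1}-1>0$, and likewise for certain $d=p^i q^j$ with $(i,j)\ne(1,1)$ near the upper boundary $q=ap+2$. For these divisors one must exploit the extra divisors of $M$ below $d$ that do not themselves divide $d$. Since $q\le ap+2<p^2$, the divisor $q$ lies below every $p^i$ with $i\ge 2$ and contributes $\varphi(q)=q-1$ to the left-hand side of the step condition; for larger $d=p^i q^j$ analogous cross-divisors $p^{a'}q^{b'}$ with $a'>i$ or $b'>j$ enter the sum. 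A uniform case analysis organized by the exponent pair $(i,j)$, using the sharp bound $q\le(p-1)^2+1$ to control the worst cases, completes the verification of $(\ast)$, and hence of the lemma.
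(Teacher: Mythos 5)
Your reduction is correct and clean: since $\gcd(a,M)=1$ with $M=n/a=p^\nu q^\mu$ and $S(a)=[a]$, the Minkowski decomposition $S(n)=\sum_{d\mid M}\varphi(d)[a]$ holds, and peeling off $d=M$ reduces the lemma to the claim $(\ast)$ that $\sum_{d\mid M,\,d<M}\varphi(d)[a]=[a(M-\varphi(M))]$. This is genuinely different in organization from the paper's proof, which instead builds $S(ap^\nu q^\mu)$ by a double induction — first on $\nu$ (appending prime powers $\varphi(p^{\nu+1})S(aq)$), then on $\mu$ — with each step being two or three explicit applications of Lemma~\ref{lem-sumofsets} whose hypothesis \eqref{cond1} is verified directly in terms of $p$, $q$, $a$. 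Your version is arguably more conceptual: it reduces everything to showing a single Minkowski sum of scaled copies of $[a]$ is an interval.

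However, the heart of the matter — verifying that the iterated application of Lemma~\ref{lem-sumofsets} never stalls, i.e.\ that $\varphi(d_{k+1})\le a\sum_{i\le k}\varphi(d_i)+1$ for every proper divisor $d_{k+1}>1$ of $M$ — is exactly where you stop. You correctly identify that the ``self-bound'' $(a+1)\varphi(d)\le ad+1$ (which suffices via $\sum_{i\le k}\varphi(d_i)\ge d_{k+1}-\varphi(d_{k+1})$) fails already at $d=p^2$, and then defer the rest to ``a uniform case analysis organized by the exponent pair $(i,j)$,'' which you never carry out. This is not a minor bookkeeping detail to be omitted; it is the entire content of the lemma. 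Without it, the argument is a plan, not a proof. There is also a second unaddressed subtlety: you order the divisors by their size $d_i$, but the clean inductive argument via Lemma~\ref{lem-sumofsets} really wants them ordered by $\varphi(d_i)$ (since it is $\varphi(d_{k+1})$ that must be dominated by the running total, and a larger $\varphi$-value must not precede a smaller one if the greedy step condition is to be both sufficient and necessary). Under the lemma's hypotheses $p=a+2$ and $q\le ap+2$ the two orderings may well agree, but that too would need to be checked. So while the decomposition is a nice alternative starting point, the proof as written has a genuine gap at the crucial verification step.
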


\begin{proof}
We do a double induction, first on $\nu$, then on $\mu$.
We begin with the case $\nu=2$, $\mu=1$, $n=ap^2 q$.
We have 
$$S(n)=S(apq)+\phi(p^2)S(aq).$$ 
Since $apq$ is $\phi$-practical, $S(apq)=[apq]$,
while $S(aq)=[a]+(q-1)[a]$.  Since $\varphi(p^2)\le apq$, \eqref{cond1} is satisfied
in Lemma \ref{lem-sumofsets} with $g=apq,h=\varphi(p^2)$, so that
$$
S(n)=[apq+\varphi(p^2)a]+\varphi(p^2q)[a].
$$
Using \eqref{largest}, the result holds in this case.

Next, we consider $\nu\ge 2$, $\mu=1$, by induction on $\nu$. 
Using the induction hypothesis, write 
\begin{align*}
S(ap^{\nu+1}q)&=S(ap^\nu q)+ \varphi(p^{\nu+1})S(aq)\\
&=[ap^\nu q-a\varphi(p^\nu q)]+\varphi(p^\nu q)[a]
+\varphi(p^{\nu+1})[a]+\varphi(p^{\nu+1}q)[a].
\end{align*}
Since $\varphi(p^{\nu+1})\le ap^\nu q-a\varphi(p^\nu q)$ (note that it suffices to show it
for $\nu=1$, then use $a=p-2,q\ge5$), Lemma \ref{lem-sumofsets} implies we can
roll the first and third sets together, getting
$$
S(ap^{\nu+1}q)=[ap^\nu q-a\varphi(p^\nu q)+a\varphi(p^{\nu+1})]+\varphi(p^\nu q)[a]
+\varphi(p^{\nu+1}q)[a].
$$
We again apply Lemma  \ref{lem-sumofsets} to roll the first two sets together (it is easy
to show \eqref{cond1} holds), getting
$$
S(ap^{\nu+1}q)=[ap^\nu q+a\varphi(p^{\nu+1})]+ \varphi(p^{\nu+1}q)[a].
$$
Again using \eqref{largest}, we have the result for all $\nu\ge2$ and $\mu=1$.

Now we assume the result at $\nu,\mu$, where $\nu\ge2,\mu\ge1$ and prove it at
$\nu,\mu+1$.  We have, using the induction hypothesis, that
$$
S(ap^{\nu}q^{\mu+1})=S(ap^\nu q^\mu)+\varphi(q^{\mu+1})S(ap^\nu)
=[ap^\nu q^\mu-a\varphi(p^\nu q^\mu)]+\varphi(p^\nu q^\mu)[a]+\sum_{i=0}^\nu\varphi(p^i q^{\mu+1})[a],
$$
where the summation sign indicates a sum of sets.  We use Lemma \ref{lem-sumofsets} and roll
the various sets into the first set as far as possible.  We do this first separately for
$i=0,1,\dots,\nu-2$ (these can be done in any order since $\varphi(p^{\nu-2}q^{\mu+1})
\le ap^\nu q^\mu-a\varphi(p^\nu q^\mu)$), getting
$$
S(ap^\nu q^{\mu+1})=[ap^\nu q^\mu-a\varphi(p^\nu q^\mu)+ap^{\nu-2}\varphi(q^{\mu+1})]
+\varphi(p^\nu q^\mu)[a]+\varphi(p^{\nu-1}q^{\mu+1})[a]+\varphi(p^\nu q^{\mu+1})[a].
$$
We can now roll the second set into the first using Lemma \ref{lem-sumofsets}, and then
the next, each time easily verifying \eqref{cond1}.  We now have
$$
S(ap^\nu q^{\mu+1})=[ap^\nu q^\mu+a\varphi(p^{\nu-1}q^{\mu})]+\varphi(p^\nu q^{\mu+1})[a].
$$
By \eqref{largest}, the result holds for $\nu,\mu+1$.  This completes our argument.
\end{proof}

\begin{lemma}
\label{lem-npnu}
Assume $a\mid n$, $a<n$, $S(n)=[g]+h [a]$, $g=n-a\varphi(n/a)$, $h=\varphi(n/a)$, and $P^+(n)<p\le g+2\le h$. Then, for $\nu\ge 1$, 
$ S(n p^\nu) = [\tilde{g}]+\tilde{h} [a]$, where $\tilde{g}=n p^\nu-a\varphi(n p^\nu/a)$ and $\tilde{h}=\varphi(n p^\nu/a)$.
\end{lemma}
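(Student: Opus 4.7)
The plan is to induct on $\nu \ge 1$, using the decomposition
\[
S(np^\nu) = S(np^{\nu-1}) + \varphi(p^\nu)\, S(n),
\]
valid because $\gcd(n,p)=1$: every subsum of totients of divisors of $np^\nu$ splits uniquely into a subsum over divisors of $np^{\nu-1}$ plus $\varphi(p^\nu)$ times a subsum over divisors of $n$, corresponding to which divisors of the form $p^\nu d$ (with $d\mid n$) are included. At each stage I will substitute the $[g]+h[a]$-form of each summand and combine the pieces two at a time via Lemma \ref{lem-sumofsets}. The bookkeeping is governed by $\tilde h_\nu = hp^{\nu-1}(p-1)$, $\tilde g_\nu = p^{\nu-1}(pg+ah)$, together with the recursions $\tilde h_{\nu+1} = p\tilde h_\nu$ and $\tilde g_{\nu+1} = \tilde g_\nu + \varphi(p^{\nu+1})g + a\tilde h_\nu$, all checked directly from the definitions.

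For the base case $\nu=1$, I would write $S(np) = ([g]+h[a]) + (p-1)([g]+h[a])$ and rearrange as $([g]+(p-1)[g]) + (h[a]+(p-1)h[a])$. The first bracket collapses to $[pg]$ by Lemma \ref{lem-sumofsets}, since $p-1\le g+1$ from the hypothesis $p\le g+2$. The crucial step is then $[pg]+h[a]=[pg+ah]$, for which Lemma \ref{lem-sumofsets} demands $h\le pg+1$. Setting $m=n/a$, so that $h=\varphi(m)$ and $g=a(m-\varphi(m))$, the hypothesis $P^+(n)<p$ forces $P^+(m)\le p-1$, and the classical bounds $\varphi(m)\le m(P^+(m)-1)/P^+(m)$ and $m-\varphi(m)\ge m/P^+(m)$ yield $h\le m(p-2)/(p-1)$ and $g\ge am/(p-1)\ge m/(p-1)$, from which $pg\ge h$ follows at once (for $a\ge 1$). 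This gives $S(np)=[pg+ah]+(p-1)h[a]=[\tilde g_1]+\tilde h_1[a]$.

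For the inductive step from $\nu$ to $\nu+1$, I would expand
\[
S(np^{\nu+1}) = [\tilde g_\nu] + \tilde h_\nu[a] + \varphi(p^{\nu+1})[g] + \varphi(p^{\nu+1})h[a],
\]
note that the last summand equals $\tilde h_{\nu+1}[a]$ since $\varphi(p^{\nu+1})h=p\tilde h_\nu$, and then show that $[\tilde g_\nu]+\varphi(p^{\nu+1})[g]+\tilde h_\nu[a]=[\tilde g_{\nu+1}]$ by two further applications of Lemma \ref{lem-sumofsets}. The first needs $\varphi(p^{\nu+1})\le \tilde g_\nu+1$, which reduces to $pg+ah\ge p(p-1)$ and follows from $g\ge p-2$ together with $ah\ge h\ge p$ (all consequences of the chain $p\le g+2\le h$ and $a\ge 1$). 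The second needs $\tilde h_\nu\le \tilde g_\nu+\varphi(p^{\nu+1})g+1$, equivalently $h(p-1-a)\le p^2 g$; when $a\ge p-1$ the left side is non-positive, and otherwise the base-case estimates for $\varphi(m)$ and $m-\varphi(m)$ reduce this to the elementary inequality $(p-2)(p-1-a)\le p^2 a$, which holds for all $p\ge 2$, $a\ge 1$.

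The main obstacle is purely bookkeeping: four invocations of Lemma \ref{lem-sumofsets}, each gated by an arithmetic inequality, together with telescoping identities for $\tilde g_\nu$ and $\tilde h_\nu$. The only substantive analytic inputs are the two classical estimates for $\varphi(m)$ and $m-\varphi(m)$ in terms of $P^+(m)$, and once these are in place the induction runs mechanically.
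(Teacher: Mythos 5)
Your proof is correct and follows essentially the same route as the paper: the same decomposition $S(np^\nu)=S(np^{\nu-1})+\varphi(p^\nu)S(n)$, the same order of applications of Lemma~\ref{lem-sumofsets} in both the base case and inductive step, with the gating inequalities verified by elementary (and slightly different, but equivalent) estimates. The only cosmetic difference is that the paper reuses the base-case bound $h\le pg$ to dispatch the final inequality in the inductive step (via $\tilde h_\nu\le\varphi(p^{\nu+1})g$), whereas you re-derive it from the $\varphi(m)$ and $m-\varphi(m)$ bounds; both work.
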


\begin{proof}
We show the result by induction on $\nu$. 
For $\nu=1$, consider that
$$
S(np)=S(n)+(p-1)S(n)=[g]+h[a]+(p-1)[g]+(p-1)h[a].
$$ 
Since $p\le g+2$, condition \eqref{cond1} holds and so Lemma \ref{lem-sumofsets} implies that
$$
S(np)=[pg]+h[a]+(p-1)h[a].
$$
We apply Lemma \ref{lem-sumofsets} again, noting that $h\le pg$ is equivalent to
$\varphi(n/a)\le n/(a+1/p)$ and this inequality follows from
$\varphi(n/a)\le(n/a)(1-1/P^+(n/a))<(n/a)(1-1/p)$.  A call to \eqref{largest}
completes the proof when $\nu=1$. 

Now assume that the result holds for the set $S(np^{\nu})$, for some $\nu\ge 1$, and write
\begin{align*}
S(n p^{\nu+1})&= S(np^{\nu}) + \varphi(p^{\nu+1}) S(n)\\
&=[np^\nu-a\varphi(np^\nu/a)]+\varphi(np^\nu/a)[a]+\varphi(p^{\nu+1})[g]+\varphi(p^{\nu+1})h[a].
\end{align*}  
Since $p\le g+2$, we have $\varphi(p^{\nu+1})<np^\nu-a\varphi(np^\nu/a)$, so that Lemma \ref{lem-sumofsets} gives
$$
S(np^{\nu+1})=[np^\nu-a\varphi(np^\nu/a)+\varphi(p^{\nu+1})g]+\varphi(np^\nu/a)[a]+\varphi(p^{\nu+1})h[a].
$$
We have seen that $\varphi(n/a)\le pg$, so that $\varphi(np^\nu/a)\le \varphi(p^{\nu+1})g$, so that
one final call to Lemma \ref{lem-sumofsets} gives
$$
S(np^{\nu+1})=[np^\nu-a\varphi(np^\nu/a)+\varphi(p^{\nu+1})g+\varphi(np^\nu/a)a]+\varphi(p^{\nu+1})h[a].
$$
Since $\varphi(p^{\nu+1})h=\varphi(np^{\nu+1}/a)$, the lemma follows from \eqref{largest}.
\end{proof}

We are now ready to prove Theorem \ref{thm-startercond}.

Let $m$ be a starter and write $m=ak=a p_1^{\nu_1} p_2^{\nu_2}\cdots p_j^{\nu_j}$, where $P^+(a)<p_1<p_2<\ldots <p_j$ and $a=\alpha(m)$.
If $j=1$, we have $\nu_1\ge 2$, or else $m$ is not a starter. Since $m$ is $\varphi$-practical and $\nu_1\ge 2$, 
we have $p_1 \le a+1$, which is equivalent to $H(m)\ge H(a)$. (One needs to use the fact that $\nu_1 \geq 2$ in order to get equivalence with $p_1 \leq a+1$, rather than $p_1 \leq a+2.$)

If $j\ge 2$, we have $p_1=a+2$ and $\nu_1\ge 2$, because otherwise $ap_1^{\nu_1}$ would be $\varphi$-practical,
contradicting the definition of $\alpha(m)$.
Thus $H(ap_1^{\nu_1})<H(a)$, as required. The set $S(a p_1^{\nu_1})$ does not contain the integer $ap_1+1$,
which implies that $p_2 \le ap_1+2$. 
We write $n_i=a p_1^{\nu_1} p_2^{\nu_2}\cdots p_i^{\nu_i}$, 
$g_i=n_i-a\varphi(n_i/a)$, $h_i=\varphi(n_i/a)$,
for $1\le i \le j$. 
Lemma \ref{lem-apq} shows that $n_2$ is $\varphi$-practical if and only if $g_2 \ge h_2-1$, 
which is equivalent to $H(n_2) \ge H(a)$.
Note that $n_i$ is not $\varphi$-practical for $1\le i < j$ by the definition of $\alpha(m)$.

If $j\ge 3$, we proceed by induction on $i$, $2\le i <j$.
Lemma \ref{lem-apq} shows that $S(n_2)=[g_2]+h_2 [a]$, $g_2 < h_2-1$ (i.e.,  $H(n_2) < H(a)$),  
since $n_{2}$ is not $\varphi$-practical, and $p_3\le g_2+2$, as $g_2+1 \notin S(n_2)$.
Now assume $S(n_i)=[g_i]+h_i [a]$,  $g_i < h_i-1$ and $p_{i+1}\le g_i+2$, for some $i\ge 2$.
If $i+1<j$, Lemma \ref{lem-npnu} shows that $S(n_{i+1})=[g_{i+1}]+h_{i+1} [a]$,  
$g_{i+1} < h_{i+1}-1$ (i.e.,  $H(n_{i+1}) < H(a)$), since $n_{i+1}$ is not $\varphi$-practical, 
and $p_{i+2}\le g_{i+1}+2$, since $g_{i+1}+1 \notin S(n_{i+1})$.
If $i+1=j$, Lemma  \ref{lem-npnu}  implies $S(n_j)=[g_j]+h_j [a]$ and  $g_j \ge h_j-1$ (i.e., $H(m) \ge H(a)$), since $n_j=m$ is $\varphi$-practical.

This completes the proof of Theorem \ref{thm-startercond}


\section{A heuristic estimate for the constant $C$ in Theorem \ref{thm-main}}

For the purpose of this heuristic, we ignore the error term in Lemma \ref{lem-5.6}, and change variables $v:=\frac{\log x}{\log 2}$,  
$u:=\frac{\log y}{\log 2}$ and $\beta_m(v):=B_m(x)/x $. 
The resulting integral equation for $\beta_m(v)$ matches the integral equation in Lemma 4 of \cite{IDD3} for the function $d(v)$, with the constant term $1$
replaced by some suitable constant. 
Corollary 6 of \cite{IDD3} gives an asymptotic formula for $(v+1)d(v)$ with a main term of the form
$$
R(v) =C +\frac{a}{(v+1)^{b}} +
2\mathrm{Re}\left(\frac{d+e i }{ (v+1)^{f+gi}}\right),$$
which corresponds to a Laplace transform (of $G(z):=e^z d(e^z-1)$) having a pole at the origin with residue $C$, 
a real pole at $-b$ with residue $a$, and two complex poles at $-(f\pm g i)$ with residues $d\pm e i$.
This suggests that $(v+1) \beta_m(v)$,  
and hence $\frac{P_\varphi(x)}{x/\log 2x}$, may also be well approximated by a function of this type.

\begin{figure}[h!]
\begin{center}
\includegraphics[height=9cm,width=14.6cm]{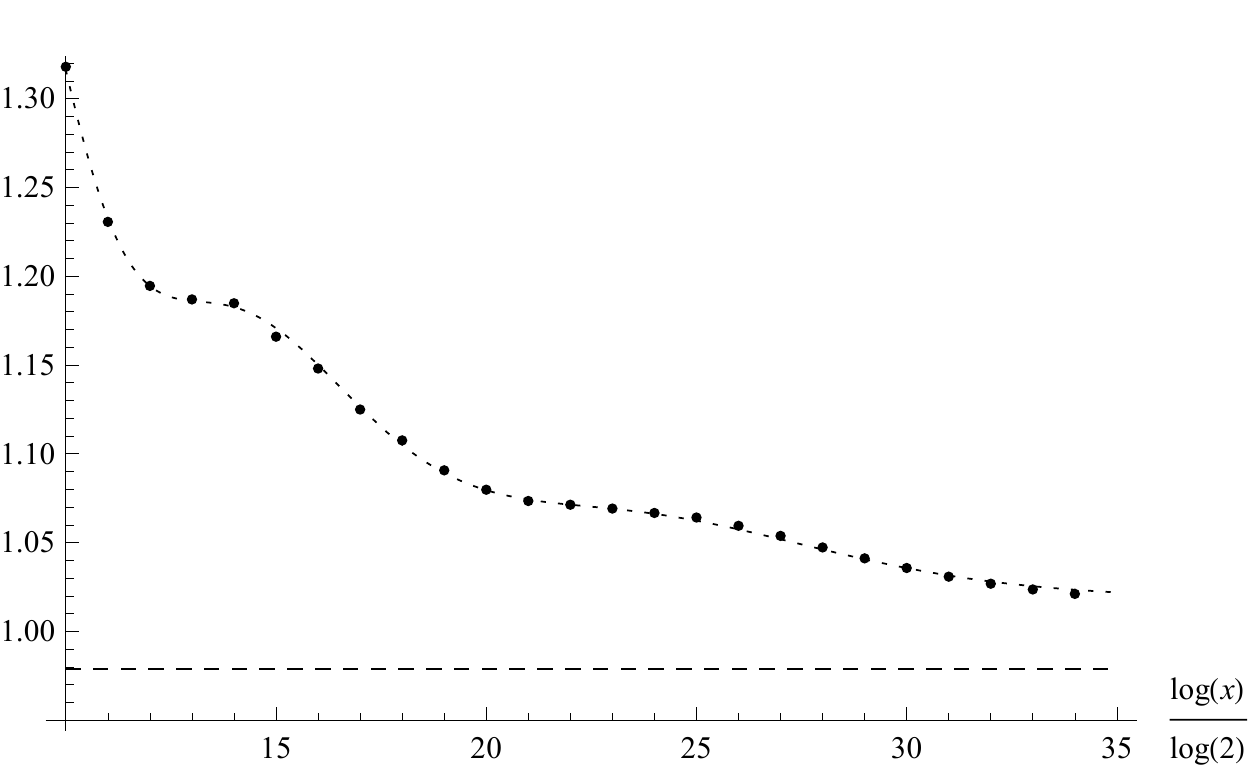}
\caption{The actual values of $P_\varphi(x)/(x/\log 2x)$ (dots) are well approximated by $R(v)$ (dotted line). Also shown is $\lim_{v\to \infty} R(v)$ (dashed line).}
\label{figure1}
\end{center}
\end{figure}

After calculating $P_\varphi(x)$ for $x=2^v$, $10\le v \le 34$, nonlinear regression leads to the model
$$
R(v) = 0.979154 +\frac{17.3307}{(v+1)^{1.66071}}+2\mathrm{Re}\left(\frac{-2.94536-4.82409 i}{ (v+1)^{2.30768+12.7422 i}}\right),
$$
which is shown with the actual values of $P_\varphi(x)/(x/\log 2x)$ in Figure \ref{figure1}.
Since $\lim_{v\to \infty} R(v) =0.979154$, our heuristic leads to the provisional estimate 
$$ \lim_{x\to \infty} \frac{P_\varphi(x)}{x/\log x} = \lim_{x\to \infty} \frac{P_\varphi(x)}{x/\log 2x} \approx 0.98 .$$

\section*{Acknowledgements}

The second author is supported by an AMS Simons Travel Grant. This work began while the second author was visiting Dartmouth College during the spring of 2015. She would like to thank the Dartmouth Mathematics Department for their hospitality. 

\bibliographystyle{amsplain}
\bibliography{practicalarxiv}

\end{document}